\documentclass[12pt]{amsart}
\usepackage{latexsym}
\usepackage{amsmath}
\usepackage{amsfonts}
\usepackage{amsthm}
\usepackage{amssymb}
\usepackage{ifthen}
\usepackage{enumerate}
\usepackage[T1]{fontenc}
\def\Eq#1#2{\ifthenelse{\equal{#1}{*}}
  {\begin{equation*}\begin{aligned}#2\end{aligned}\end{equation*}}
  {\begin{equation}\begin{aligned}\label{E#1}#2\end{aligned}\end{equation}}}

\newtheorem{thm}{Theorem}
\newtheorem{prop}{Proposition}
\newtheorem{coro}{Corollary}
\newtheorem{lemm}{Lemma}

\theoremstyle{remark}
\newtheorem{remark}{Remark}
\newtheorem{exmp}{Example}
\newtheorem{claim}{Claim} 
\newtheorem{prclm}{Proof of Claim}[section]

\theoremstyle{definition}
\newtheorem{defin}{Definition}
\newtheorem{probl}{Problem}
\newtheorem{opprob}{Open Problem}

\newcommand{\NN}{\mathbb{N}}

\newcommand{\RR}{\mathbb{R}}

\newcommand{\Rn}{\mathbb{R}^n}

\newcommand{\exmo}{extended monotone left-inverse }

\newcommand{\nma}{\Vert}

\newcommand{\conv}{\mbox{\rm conv}}

\newcommand{\map}{\longrightarrow}

\newcommand{\bsz}[2]{\left \langle #1 \,, #2 \right \rangle}

\author[P\'eter T\'oth]{P\'eter T\'oth}

\title[Generalized inverses]{Generalized inverses of strictly 
monotone transformations}

\address{Institute of Mathematics,
University of Debrecen,
4002 Debrecen, Pf.~400, Hungary}
\email{toth.peter@science.unideb.hu}

\keywords{generalized monotonicity, generalized inverse, 
quasi-arithmetic means, equality of means} 

\subjclass[2020]{47H05, 26E60, 52A20} 

\thanks{
The research 
has been supported by 
the EKÖP-25-0 University Research Scholarship Program of 
the Ministry for Culture and Innovation 
from the source of the 
National Research, Development and Innovation Fund, 
by the PhD Excellence Scholarship from the 
Count Istv\'an Tisza Foundation 
for the University of Debrecen and by 
the University of Debrecen Program 
for Scientific Publication.}

\begin{document}

\begin{abstract} 
Let $ C \subseteq \Rn $ be a convex set. The mapping 
$ f : C \longrightarrow \Rn $ is strictly increasing, if 
$ \langle f(x)-f(y), x-y \rangle > 0 $ 
for all distinct elements 
$ x,y \in C $. Applying classical theorems of 
finite dimensional convex geometry and convex analysis, 
we show that the inverse functions has a 
unique extension 
$ f^{(-1)} : \conv (f(C)) \longrightarrow C $ 
such that $ f^{(-1)} $ is monotone, continuous 
and it acts as a left-inverse of $ f $. 
As an application we introduce the concept of 
vector-valued weighted quasi-arithmetic means and discuss 
their equality problem. 
\end{abstract}

\maketitle


\section{Introduction}

It is an elementary fact that if $ I \subseteq \RR $ is an 
interval and $ f : I \map \RR $ is a continuous, strictly 
monotone function, then $ f(I) $ is again an interval, and 
the inverse function $ f^{-1} : f(I) \map I $ is also 
strictly monotone (in the same sense) and continuous. 
When $ f $ is not continuous, the ordinary 
inverse $ f^{-1} $ is still strictly monotone, 
but its domain of definition is not an interval. 
However, it is known 
that $ f^{-1} $ can be extended to the convex hull of 
$ f(I) $ such that this extension is still monotone, 
what is more, continuous. This result is proved explicitly 
in the paper \cite{GP20} by Grünwald and Páles and 
it also appears in the textbook \cite{GaPa16}. 
A famous particular case is the 
{\em quantile function} of a strictly increasing CDF 
of a random variable. 

\begin{prop}{\cite[Lemma 1]{GP20}}
Let $ I \subseteq \RR $ be an interval and let 
$ f : I \map \RR $ be a 
strictly monotone function. Then there exists 
a uniquely determined monotone function 
$ f^{(-1)} : \conv \left( f(I) \right) \map I $ 
such that 
\[
f^{(-1)} \left( f(x) \right) = x 
\hspace{10mm} \left( \, x \in I \, \right).  
\]
Moreover, $ f^{(-1)} $ is continuous. 
\end{prop}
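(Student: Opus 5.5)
We treat the case where $ f $ is strictly increasing; the decreasing case follows by applying that case to $ -f $ and composing with $ y \mapsto -y $. Put $ J := \conv\left(f(I)\right) $, an interval. We define the extension explicitly by
\[
f^{(-1)}(y) := \sup \{ x \in I : f(x) \le y \} \hspace{10mm} (\, y \in J \,).
\]
We first check that this is well defined and lands in $ I $. For $ y \in J $ there are $ a, b \in I $ with $ f(a) \le y \le f(b) $. The set $ \{ x \in I : f(x) \le y \} $ is then nonempty, since it contains $ a $. Every element $ x $ of it satisfies $ x \le b $: if $ x > b $ then $ f(x) > f(b) \ge y $. Hence the supremum lies in $ [a,b] \subseteq I $. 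Monotonicity is immediate, because the set grows with $ y $. The left-inverse property holds because, for $ y = f(x_0) $, strict monotonicity gives $ \{ x : f(x) \le f(x_0) \} = I \cap (-\infty, x_0] $, whose supremum is $ x_0 $.

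\textbf{Continuity.} Let $ g = f^{(-1)} $. A monotone function on an interval is continuous exactly when it has no jumps. Equivalently, it suffices to show that $ g(J) $ is an interval, since a monotone map whose image is an interval is continuous. We claim $ g(J) = I $. Indeed $ g(J) \supseteq g(f(I)) = I $ by the left-inverse property, and $ g(J) \subseteq I $ by construction. Since $ I $ is an interval, continuity follows. This is the cleanest route, and it avoids $\varepsilon$--$\delta$ estimates.

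\textbf{Uniqueness.} This is the step requiring the most care. Let $ h $ be any monotone function $ J \to I $ with $ h \circ f = \mathrm{id}_I $. Then $ h(J) \supseteq I $, so $ h(J) = I $, and as above $ h $ is continuous. The function $ h $ must be increasing: if $ h $ were decreasing and $ x_1 < x_2 $, then $ f(x_1) < f(x_2) $ would give $ x_1 = h(f(x_1)) \ge h(f(x_2)) = x_2 $, a contradiction (if $ I $ is a single point the statement is trivial). Now take $ y \in J \setminus f(I) $. Let $ x^* = g(y) $ and set $ L = \{ x \in I : f(x) < y \} $ and $ U = \{ x \in I : f(x) > y \} $. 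Every element of $ L $ is at most $ x^* $ and every element of $ U $ is at least $ x^* $. Because $ I $ is an interval, $ L \cup U = I $, and $ x^* $ is a sup/inf point, points of $ I $ accumulate at $ x^* $ from within $ L $ or $ U $ on the relevant side, or else $ x^* $ itself belongs to $ L $ or $ U $.

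If $ x^* \in L $, then $ h(y) \ge h(f(x^*)) = x^* $. Also $ h(y) \le h(f(u)) = u $ for all $ u \in U $, and $ \inf U = x^* $ (otherwise some point of $ I $ would lie strictly between, contradicting $ L \cup U = I $). Hence $ h(y) = x^* $. The case $ x^* \in U $ is symmetric. If $ x^* $ lies in neither set, then it is not in $ I $, which is impossible. Therefore $ h = g $.

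The main obstacle is this squeeze argument at points $ y $ in the gaps of $ f(I) $, together with the endpoint cases where $ x^* $ is an endpoint of $ I $. In those cases one of $ L $ or $ U $ may be empty, but the nonempty side, combined with $ h(y) \in I $ and the bound coming from that endpoint, still pins $ h(y) $ down.
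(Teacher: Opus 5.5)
Your proof is correct. The paper gives no proof of this proposition; it quotes it from \cite{GP20} as motivation. The relevant comparison is therefore with the paper's $n$-dimensional machinery specialized to $n=1$, and you take a genuinely different route for existence and continuity.

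\textbf{Uniqueness.} Your squeeze ($l \le h(y) \le u$ for all $l\in L$, $u\in U$) is exactly the necessary condition of Proposition~\ref{prop-NecessCond}. For $n=1$ and $y\in J\setminus f(I)$, the set $\bigcap_{x\in I}\{s\in I : (x-s)(f(x)-y)\ge 0\}$ equals $\{s\in I : \sup L\le s\le \inf U\}=\{x^*\}$.

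\textbf{Existence and continuity.} The paper uses Helly's theorem and Ky Fan's minimax theorem (Lemmas~\ref{lemma-LinAlg1} and \ref{lemm-Intersection_nmpty}); you instead exhibit the point directly by the $\sup$-formula, relying on order completeness of $\RR$. The paper's continuity proof is the sphere-compactness argument of Theorem~\ref{thm-ExmoContinuous}; you instead use the order-theoretic fact that a monotone function on an interval whose image is an interval cannot jump.

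\textbf{Closedness.} Neither of your steps depends on whether $I$ is closed. The candidate $x^*$ is trapped in $[a,b]\subseteq I$, and $g(J)=I$ holds for any interval. The paper needs $C$ closed for Helly's theorem and the recession-cone argument, and again to place $f^{(-1)}(v)+\delta h$ in $C$. This is precisely the one-dimensional phenomenon that fails in Example~\ref{exmpl-Okamura}.

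\textbf{Trade-off.} Your argument is shorter and works for all intervals. However, it uses the linear order of $\RR$ essentially, so it does not extend beyond $n=1$. The paper's approach generalizes, at the cost of requiring a closed domain.

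\textbf{Two small remarks.} First, for $y\in J\setminus f(I)$ both $L$ and $U$ are nonempty, because $f(a)<y<f(b)$. So the empty-side endpoint case you worry about at the end never occurs. Second, the continuity of $h$ that you derive in the uniqueness part is never used; only the fact that $h$ is increasing matters there.
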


During the 
{\em 60th International Symposium on Functional Equations} 
Zs. Páles proposed an Open Problem (see \cite[4. Problem]{60ISFE}) 
about the generalization of the previous observation to 
higher dimensional spaces. 
Before formulating the precise question we clarify some 
notions and notations. Throughout the whole paper 
$ n $ denotes a positive integer. We consider the $ n $ 
dimensional euclidean space $ \Rn $ equipped with 
the standard inner product, 
the induced norm and topology.  The inner product of 
$ x,y \in \Rn $ will be denoted by $ \langle x, y \rangle $. 
For an arbitrary nonempty set $ S \subseteq \Rn $ its convex 
hull is denoted by $ \conv(S) $. 
For a detailed introduction to fundamental concepts of 
convex geometry we recommend the monographs 
of Lay \cite{Lay82} and Rockafellar \cite{Roc70}. 

\begin{defin}\label{def-StrictlyMon}
Let $ K \subseteq \Rn $ be a nonempty, convex set. 
We say that $ f : K \map \Rn $ 
is an {\em increasing mapping}, if 
\[
\left \langle \, f(x) - f(y) \, , \, 
x - y \, \right \rangle \geq 0 
\] 
for all $ x,y \in K $. Similarly, $ f $ is called a 
{\em decreasing mapping} if 
\[
\left \langle \, f(x) - f(y) \, , \, 
x - y \, \right \rangle \leq 0 
\] 
for all $ x,y \in K $. 
Moreover, if the above inequalities are strict 
for all elements $ x,y \in K $ 
such that $ x \neq y $, then we call these 
{\em strictly increasing/strictly decreasing mapping}s, 
respectively.  
\end{defin}

A notable example for increasing mappings is the gradient 
map of a convex function of $ n $ variables. 
In the $ n=1 $ dimensional case the notion is 
equivalent to the standard (strict) monotonicity 
of a real function $ f $ defined on an interval, 
as the inner product 
is the ordinary multiplication. 
Notice that multiplying an increasing mapping by $ (-1) $ 
we obtain a decreasing one. Thus, in the sequel we mainly 
focus on (strictly) increasing mappings. 
It is easy to see that 
if $ f $ is strictly increasing, then it has to be injective. 

Consequently, the ordinary inverse $ f^{-1} $ of a strictly 
increasing mapping always exists. The main question of 
our paper (based on the open problem by Páles) is the 
following: {\em Does there exist an extension of $ f^{-1} $ 
to the convex hull of the image, which acts as a 
left-inverse and remains monotone? 
Does such an extended inverse have some additional 
properties such as continuity?} More precisely, 
we consider the problem below.  

\begin{probl}\label{prob-MainPR}{\cite[4. Problem by Zs. Páles]{60ISFE}}
Let $ \emptyset \neq K \subseteq \Rn $ be a convex set and 
let $ f : K \map \Rn $ be a strictly increasing mapping. 
Does there exist an increasing mapping 
$ f^{(-1)} : \conv \left( f(K) \right) \map K  $ such that 
$ f^{(-1)} \left( f(x) \right) = x $ for all $ x \in K $, 
moreover 
\[
\langle \, f^{(-1)}(x) - f^{(-1)}(y) 
\, , \, 
x - y \, \rangle \geq 0 
\]
for all $ x,y \in \mbox{conv} \left( f(K) \right) $? 
Can $ f^{(-1)} $ be continuous? 
\end{probl}

We give a positive answer to these questions 
assuming that the domain is closed and convex. 
We demonstrate that the additional topological property 
of the domain is crucial, which is in contrast to the 
$1$-dimensional setting. 
A function $ f^{(-1)} $ fulfilling the desired properties 
is going to be called the {\em extended monotone 
left-inverse of $ f $}. When it is not confusing, 
we simply refer to this function 
as the {\em generalized inverse of $ f $}. 

In Section \ref{Section-Appl}, as 
an application, we introduce the concept of vector-valued 
weighted quasi-arithmetic means defined on a closed, convex 
subset of $ \Rn $. This is the natural generalization of 
the classical notion: the $ m $-variable quasi-arithmetic 
mean (briefly, QAM) 
generated by the strictly increasing, continuous 
function $ \varphi : I \map I $ is 
\[
\varphi^{-1} \left( 
\frac{\varphi(x_1) + \dots + \varphi(x_m)}{m} \right)
\hspace{10mm} \left( x_1 \,, \dots , x_m \in I \right), 
\]
where $ I \subseteq \RR $ is an interval. 
Different characterizations of the QAM are due to 
Kolmogorov \cite{Kol30}, Nagumo \cite{Nag30}, 
De Finetti \cite{dFi31}, Aczél \cite{Acz66}, 
and recently Burai--Kiss--Szokol \cite{BKSz21, BKSz23}. 
We discuss the equality problem of 
weighted vector-valued QAMs 
for a fixed number of variables. 
The solution in the real valued setting is elaborated in 
the classical monograph of Hardy--Littlewood--Pólya 
\cite{HLP34}. 

Recent results about the equality problem 
of several generalizations of QAMs are contained in 
\cite{Bur19, GP20, KP26, LPZ20, PP26, PZ22}. 
These articles concern real variables. 
For concepts of vector valued means we 
shall mention the conference paper \cite{Nie23} of Nielsen 
following a differential geometric approach and the 
paper of Leonetti \cite{Leo23}. 

\section{Main results} 

Our first main objective is to guarantee the 
existence of a generalized left-inverse for strictly 
increasing mappings defined on closed, convex domains. 
In order to construct the generalized inverse, we begin 
with establishing necessary conditions for the existence. 

\begin{prop}\label{prop-NecessCond}
Let $ \emptyset \neq K \subseteq \Rn $ 
be a convex set, $ f : K \map \Rn $ 
be a strictly increasing mapping, and assume that 
$ f^{(-1)} : \conv \left( f(K) \right) \map K $ 
is an \exmo of $ f $. 
Then 
\begin{equation}\label{eq-NcCond1}
f^{(-1)}(z) \in 
\bigcap_{x \in K} 
\left\lbrace s \in K : 
\left\langle x - s \,,\, 
f(x)- z \right\rangle \geq 0 \right\rbrace  
\end{equation} 
for all $ z \in \conv \left( f(K) \right) $. 
\end{prop}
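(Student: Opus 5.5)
This is essentially a direct consequence of the definition of an extended monotone left-inverse, so the plan is simply to unwind the definitions. Fix $ z \in \conv \left( f(K) \right) $ and put $ s := f^{(-1)}(z) $. The codomain of $ f^{(-1)} $ is $ K $, so $ s \in K $ holds automatically. It therefore remains to show that $ \left\langle x - s \,,\, f(x) - z \right\rangle \geq 0 $ for every $ x \in K $.

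Fix $ x \in K $. Since $ f(x) \in f(K) \subseteq \conv \left( f(K) \right) $, both $ f(x) $ and $ z $ lie in the domain of $ f^{(-1)} $. We may therefore apply the monotonicity of $ f^{(-1)} $ to the pair $ f(x), z $:
\[
\left\langle f^{(-1)}\left(f(x)\right) - f^{(-1)}(z) \,,\, f(x) - z \right\rangle \geq 0 .
\]
The left-inverse property gives $ f^{(-1)}\left(f(x)\right) = x $, and by definition $ f^{(-1)}(z) = s $. Substituting both, the inequality becomes exactly $ \left\langle x - s \,,\, f(x) - z \right\rangle \geq 0 $.

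Since $ x \in K $ was arbitrary, $ s $ belongs to every set in the intersection \eqref{eq-NcCond1}, which proves the claim. There is no real obstacle here. The only points to check are that $ f(x) $ belongs to the domain $ \conv\left(f(K)\right) $ of $ f^{(-1)} $, and that the inner product in \eqref{eq-NcCond1} has its arguments in the same order as in the monotonicity inequality. It does, since the inequality is symmetric under swapping both pairs simultaneously. Strict monotonicity of $ f $ is not needed for this step; it matters only in that it guarantees $ f^{-1} $ exists at all.
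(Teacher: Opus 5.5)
Your proposal is correct and matches the paper's proof: both apply the monotonicity of $f^{(-1)}$ to the pair $f(x), z \in \conv\left(f(K)\right)$ and then use the left-inverse property $f^{(-1)}(f(x)) = x$ to get $\left\langle x - f^{(-1)}(z) \,,\, f(x) - z \right\rangle \geq 0$ for every $x \in K$. You also note explicitly that $f^{(-1)}(z) \in K$ follows from the codomain, a point the paper leaves implicit.
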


\begin{proof}
Since $ f^{(-1)} $ is an increasing mapping, the inequality 
\[
\langle  f^{(-1)}(w) -  f^{(-1)}(z) , w-z \rangle \geq 0 
\]
holds for all vectors 
$ w,z \in \conv \left( f(K) \right) $. 
In particular, if $ w \in f(K) $, that is, there exists 
$ x \in K $ such that $ w = f(x) $, then 
$
\langle  f^{(-1)}(f(x)) -  f^{(-1)}(z) , f(x)-z \rangle \geq 0. 
$
Due to the left-inverse property, this means that 
\[
\langle  x -  f^{(-1)}(z) , f(x)-z \rangle \geq 0 
\] 
for any $ x \in K $ and $ z \in \conv (f(K))$. Therefore 
$ f^{(-1)}(z) $ is indeed contained in 
the intersection. 
\end{proof}

Let us observe that in the intersection \eqref{eq-NcCond1} 
each member is convex set. Indeed, every set is obtained 
as the intersection of the convex set $ K $ and the 
solution set of the inner product inequality, which is a 
closed half-space by definition. 
A powerful classical result about the intersection of 
convex sets of $ \Rn $ is Helly's Theorem. The theorem 
has numerous variations, thus we explicitly formulate 
the most suitable one. 

\begin{thm}[Helly's Theorem]\label{thm-Helly} 
Let $ \mathcal{C} $ be a family of nonempty 
closed convex sets in $ \Rn $ 
and suppose that 
$ \mathcal{C} $ contains at least 
$ n+1 $ members. 
If there exists a finite subfamily 
$ \mathcal{C}_0 \subseteq \mathcal{B} $ such that 
$ \bigcap \mathcal{C}_0 $ is bounded, moreover 
every subfamily of $ n+1 $ sets 
from $ \mathcal{C} $ has a nonempty intersection, 
then $ \bigcap \mathcal{C} \neq \emptyset $. 
\end{thm}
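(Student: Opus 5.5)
Since the finite intersection property is ruled out only by boundedness, the plan is to reduce the statement to compactness of closed bounded sets in $\Rn$ together with the classical finite Helly theorem, which I take as known: for finitely many, at least $n+1$, convex sets in $\Rn$, if every $n+1$ of them intersect, then all of them intersect. The typo $\mathcal{B}$ is read as $\mathcal{C}$, so $\mathcal{C}_0\subseteq\mathcal{C}$ is a finite subfamily whose intersection $B:=\bigcap\mathcal{C}_0$ is bounded.

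\emph{Step 1: every finite subfamily has nonempty intersection.} Let $\mathcal{F}\subseteq\mathcal{C}$ be finite. If $\mathcal{F}$ has at least $n+1$ members, the finite Helly theorem applies directly, since every $n+1$ of its members intersect by hypothesis. If $\mathcal{F}$ has fewer than $n+1$ members, enlarge it by further members of $\mathcal{C}$ (possible because $|\mathcal{C}|\ge n+1$) to a family of exactly $n+1$ sets. This family intersects by hypothesis, and so does the smaller family $\mathcal{F}$. I would also record the standard proof of the finite Helly theorem, by induction via Radon's lemma, in case it is not to be cited.

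\emph{Step 2: pass to the compact set $B$.} The set $B$ is a finite intersection of closed sets, hence closed, and it is bounded by hypothesis. Therefore $B$ is compact, and by Step 1 it is nonempty. For each $C\in\mathcal{C}$ put $C' = C\cap B$; this is a closed subset of the compact set $B$. For finitely many $C_1,\dots,C_k\in\mathcal{C}$,
\[
C_1'\cap\dots\cap C_k' = C_1\cap\dots\cap C_k\cap\bigcap\mathcal{C}_0 ,
\]
which is a finite intersection of members of $\mathcal{C}$, hence nonempty by Step 1. Thus $\{C' : C\in\mathcal{C}\}$ is a family of closed subsets of a compact space with the finite intersection property.

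\emph{Step 3: conclude.} By compactness, $\bigcap_{C\in\mathcal{C}} C' \neq \emptyset$. This set is exactly $\bigcap\mathcal{C}$, because every member of $\mathcal{C}_0$ already belongs to $\mathcal{C}$. The only point that needs care is the bookkeeping in Step 1 when the finite subfamily has fewer than $n+1$ members, which the padding argument handles. The real content lies in the finite Helly theorem, and that is classical.
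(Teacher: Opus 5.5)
Your proof is correct, reading $\mathcal{B}$ as $\mathcal{C}$ as you do. It does not follow the paper, though, because the paper gives no argument for Theorem~\ref{thm-Helly}. It cites Rockafellar's Corollary 21.3.2, a slightly stronger version in which the bounded finite subintersection is replaced by the hypothesis that the sets have no common direction of recession. Rockafellar derives it from a theorem of the alternative for (possibly infinite) systems of closed convex inequalities.

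Your reduction uses the finite Helly theorem (via Radon) to get the finite intersection property, then compactness of $\bigcap\mathcal{C}_0$. This is the classical elementary proof, and it shows that closedness and boundedness are used only to make $\bigcap\mathcal{C}_0$ compact. It also explains why the statement requires $\mathcal{C}$ to have at least $n+1$ members: you need that for the padding step. Rockafellar avoids this by assuming that every subcollection of $n+1$ \emph{or fewer} sets intersects. The citation gains the more flexible recession-cone hypothesis and the link to inequality systems, neither of which the paper needs.

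Your Steps 2--3 also match how the paper uses the theorem. In the proof of Lemma~\ref{lemm-Intersection_nmpty}, the paper already applies the finite Helly theorem to get the compact nonempty set $M=\bigcap_{k=1}^m C_{u_k}$. Your compactness argument applied to the traces $C_x\cap M$ would finish that proof directly, without invoking the infinite version.
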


A slightly stronger version can be found in 
\cite[Corollary 21.3.2]{Roc70}. The following Lemma 
of pure linear algebra 
will be important to ensure 
that the assumptions of Helly's Theorem hold.  

\begin{lemm}\label{lemma-LinAlg1}
Let $ n,d \in \NN $ and let us assume that for the vectors 
$ x_1 \,, \dots , x_d \in \Rn $ and 
$ y_1 \,, \dots , y_d \in \Rn $ the inequalities 
\begin{equation}\label{eq1-lemma-linalg}
\bsz{x_i - x_j}{y_i - y_j} \geq 0 
\hspace{10mm}  
\left( i,j = 1 , \dots , d \right)
\end{equation} 
are fulfilled. Then, for any $ z \in \Rn $, 
there exists $ v \in \conv \left( 
x_1 \,, \dots, x_d \right) $ such that the 
inequalities 
\begin{equation}\label{eq2-lemma-linalg}
\bsz{x_j - v}{y_j - z} \geq 0 
\hspace{10mm} 
\left( j = 1 , \dots , d \right) 
\end{equation} 
hold. 
\end{lemm}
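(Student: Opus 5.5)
The plan is to reduce the statement to a finite-dimensional minimax problem on the standard simplex and to apply von Neumann's minimax theorem (equivalently, a Ky Fan type inequality). Let
\[
\Delta = \Bigl\{ \lambda \in \RR^d : \lambda_i \geq 0, \ \textstyle\sum_{i=1}^d \lambda_i = 1 \Bigr\},
\]
and for $ \lambda \in \Delta $ put $ v(\lambda) = \sum_{i=1}^d \lambda_i x_i \in \conv(x_1,\dots,x_d) $. Fix $ z \in \Rn $ and define
\[
\Phi(\lambda,\mu) = \sum_{j=1}^d \mu_j \bsz{x_j - v(\lambda)}{y_j - z} \hspace{10mm} (\lambda,\mu \in \Delta).
\]
Then $ \Phi $ is affine, hence continuous and concave, in $ \lambda $, and linear, hence convex, in $ \mu $. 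Both variables range over the compact convex set $ \Delta $, so the minimax theorem gives
\[
\max_{\lambda\in\Delta}\min_{\mu\in\Delta}\Phi(\lambda,\mu)=\min_{\mu\in\Delta}\max_{\lambda\in\Delta}\Phi(\lambda,\mu).
\]

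The key step, and the only place where the hypothesis \eqref{eq1-lemma-linalg} enters, is the diagonal estimate $ \Phi(\mu,\mu) \geq 0 $ for every $ \mu \in \Delta $. Write $ \bar x = v(\mu) $. Since $ \sum_j \mu_j (x_j - \bar x) = 0 $, the terms containing $ z $ cancel, which leaves
\[
\Phi(\mu,\mu) = \sum_j \mu_j \bsz{x_j - \bar x}{y_j} = \sum_{i,j} \mu_i \mu_j \bsz{x_j - x_i}{y_j}.
\]
Interchanging $ i $ and $ j $ and averaging the two expressions yields
\[
\Phi(\mu,\mu) = \tfrac12 \sum_{i,j} \mu_i \mu_j \bsz{x_j - x_i}{y_j - y_i} \geq 0
\]
by \eqref{eq1-lemma-linalg}. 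Hence $ \max_{\lambda} \Phi(\lambda,\mu) \geq \Phi(\mu,\mu) \geq 0 $ for every $ \mu $, so the right-hand side of the minimax identity is nonnegative.

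By the minimax identity there is then some $ \lambda^* \in \Delta $, attained because $ \Delta $ is compact and $ \Phi $ is continuous, with $ \Phi(\lambda^*,\mu) \geq 0 $ for all $ \mu \in \Delta $. Choosing $ \mu = e_j $, the $ j $-th unit vector, gives exactly \eqref{eq2-lemma-linalg} with $ v = v(\lambda^*) \in \conv(x_1,\dots,x_d) $.

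I expect the main obstacle to be the diagonal estimate, namely spotting the symmetrization that turns $ \Phi(\mu,\mu) $ into a quadratic form in the monotonicity quantities $ \bsz{x_i - x_j}{y_i - y_j} $. The remaining steps are a routine application of the minimax theorem. If one prefers to avoid minimax, the same argument can be run through Ky Fan's inequality or through the KKM lemma applied to the closed sets
\[
F_j = \{ \lambda \in \Delta : \bsz{x_j - v(\lambda)}{y_j - z} \geq 0 \},
\]
where the same computation is what verifies the KKM covering condition.
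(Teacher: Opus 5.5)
Your proposal is correct and follows the paper's proof essentially step for step. The paper also applies Ky Fan's minimax theorem to $G(v,a)=\sum_k a_k\bsz{x_k-v}{y_k-z}$ on $\conv(x_1,\dots,x_d)\times\Delta_d$ and bounds the min-max side by evaluating at $v_a=\sum_j a_j x_j$ with the same symmetrization; the only cosmetic difference is that you parametrize $v$ by barycentric coordinates $\lambda\in\Delta$, so both factors of the product lie in $\RR^d$.
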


\begin{proof}
Let us introduce the following two notations: 
\[
C:= \conv \left( x_1 \,, \dots , x_d \right) 
\hspace{5mm} \mbox{ and } \hspace{5mm} 
\Delta_d := 
\lbrace a = 
\left( a_1 \,, \dots \,a_d \right) \in \left[ 0,1 \right]^d 
\, : \, a_1 + \dots + a_n = 1 \rbrace. 
\] 
We have to show that there exists $ v \in C $ 
such that 
\[
\min_{k=1, \dots , d} 
\bsz{x_k-v}{y_k-z} \geq 0. 
\]
However, this is equivalent to the assertion that 
\[
\max_{v \in C} \min_{k=1, \dots , d} 
\bsz{x_k-v}{y_k-z} \geq 0. 
\]
On the other hand, for any fixed $ v \in C $, 
$ \min_{k=1, \dots , d} \bsz{x_k-v}{y_k-z} $ is the 
minimum of $ d $ real numbers, so it is the minimum 
of the convex hull of these numbers. That is, 
\[
\min_{k=1, \dots , d} \bsz{x_k-v}{y_k-z} = 
\min_{a \in \Delta_d} 
\sum_{k=1}^d a_k \bsz{x_k-v}{y_k-z}. 
\]
Consequently, we need to verify that 
\begin{equation}\label{eq-LinAlglem-minmax1} 
\max_{v \in C} \min_{a \in \Delta_d} 
\sum_{k=1}^d a_k \bsz{x_k-v}{y_k-z} \geq 0. 
\end{equation}
Let us define the function $ G : C \times \Delta_d 
\map \RR $ with the following formula: 
\[
G(v,a) := \sum_{k=1}^d a_k \bsz{x_k-v}{y_k-z} 
\hspace{10mm} \left( v \in C, a \in \Delta_d \right).  
\]
It is clear that $ G $ is defined on the Cartesian product 
of two compact, convex subsets of $ \RR^d $, moreover 
$ G $ is affine in its first variable and 
linear in its second variable. Therefore, 
Ky Fan's Minimax Theorem \cite{Fan53} is applicable in 
\eqref{eq-LinAlglem-minmax1}, so we have to 
show 
\begin{equation}\label{eq-LinAlglem-minmax2} 
\min_{a \in \Delta_d} \max_{v \in C} 
G(v,a) 
= 
\min_{a \in \Delta_d} \max_{v \in C} 
\sum_{k=1}^d a_k \bsz{x_k-v}{y_k-z} \geq 0. 
\end{equation}

For any fixed vector 
$ a = \left( a_1 \,, \dots a_d \right) \in \Delta_d $ 
the convex combination 
$ v_a = a_1 x_1 + \dots + a_d x_d $ is contained in $ C $. 
Now we may calculate 
\begin{align}
G(v_a,a) & = 
\sum_{k=1}^d a_k \bsz{x_k-v_a}{y_k-z} = 
\sum_{k=1}^d a_k \bsz{x_k- \sum_{j=1}^d a_j x_j}{y_k-z} \\ 
\label{hehe}
& = \sum_{k=1}^d \sum_{j=1}^d a_k a_j \bsz{x_k-x_j}{y_k} 
- \sum_{k=1}^d \sum_{j=1}^d a_k a_j \bsz{x_k-x_j}{z}, 
\end{align}
using the fact that $ \sum_{j=1}^d a_j = 1 $. 
The second term is clearly zero, since 
\[
2 \cdot \sum_{k=1}^d \sum_{j=1}^d a_k a_j \bsz{x_k-x_j}{z} 
= \sum_{k=1}^d \sum_{j=1}^d a_k a_j 
\bigl( \bsz{x_k-x_j}{z} + \bsz{x_j-x_k}{z} \bigr) = 0.  
\]
For the investigation of the first term we shall utilize 
assumption \eqref{eq1-lemma-linalg}, whence 
\begin{align*}
& 0 \leq 
\sum_{k=1}^d \sum_{j=1}^d a_k a_j \bsz{x_k-x_j}{y_k-y_j} \\ 
& = \sum_{k=1}^d \sum_{j=1}^d a_k a_j \bsz{x_k-x_j}{y_k} 
- \sum_{k=1}^d \sum_{j=1}^d a_k a_j \bsz{x_k-x_j}{y_j} \\ 
& = \sum_{k=1}^d \sum_{j=1}^d a_k a_j \bsz{x_k-x_j}{y_k} 
+ \sum_{j=1}^d \sum_{k=1}^d a_j a_k \bsz{x_j-x_k}{y_j} 
= 2 \cdot \sum_{k=1}^d \sum_{j=1}^d a_k a_j \bsz{x_k-x_j}{y_k},  
\end{align*} 
so the first term in \eqref{hehe} is non-negative. 
Consequently, we have obtained that for any $ a \in \Delta_d $ 
there exists $ v_a \in C $ such that $ G(v_a, a) \geq 0 $. 
Thus \eqref{eq-LinAlglem-minmax2} is indeed fulfilled, 
which is equivalent to \eqref{eq-LinAlglem-minmax1}, 
and that completes the proof. 
\end{proof}

\begin{lemm}\label{lemm-Intersection_nmpty}
Let $ \emptyset \neq C \subseteq \Rn $ be a closed convex set 
and let $ f : C \map \Rn $ be a strictly increasing mapping. 
Suppose that, for any $ z \in \conv \left( f(C) \right) $ 
and for any $ x_1 \,, \dots , x_n\,, x_{n+1} \in C $, 
we have 
\begin{equation}\label{eq-prop_HellyCond} 
\bigcap_{j=1}^{n+1} 
\lbrace s \in C \, : \, 
\bsz{x_j-s}{f(x_j)-z} \geq 0 
\rbrace \neq \emptyset. 
\end{equation}
Then 
\[
\bigcap_{x \in C} 
\lbrace s \in C \, : \, 
\bsz{x-s}{f(x)-z} \geq 0 
\rbrace \neq \emptyset. 
\]
\end{lemm}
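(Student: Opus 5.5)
The plan is to apply Helly's Theorem (Theorem~\ref{thm-Helly}) to the family
\[
\mathcal{C} := \lbrace A_x : x \in C \rbrace, \qquad
A_x := \lbrace s \in C : \bsz{x-s}{f(x)-z} \geq 0 \rbrace ,
\]
with $ z \in \conv(f(C)) $ fixed. Each $ A_x $ is the intersection of the closed convex set $ C $ with a closed half-space, so it is closed and convex. Applying \eqref{eq-prop_HellyCond} with $ x_1 = \dots = x_{n+1} = x $ shows $ A_x \neq \emptyset $. The same hypothesis, with repeated indices allowed, says that any $ n+1 $ members of $ \mathcal{C} $ intersect. If $ \mathcal{C} $ has at most $ n+1 $ distinct members (for instance when $ C $ is a singleton), the conclusion follows directly from \eqref{eq-prop_HellyCond}. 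So we may assume there are at least $ n+1 $ members, and the only remaining hypothesis of Helly's Theorem is a finite subfamily with bounded intersection.

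For boundedness I will use recession cones. A nonempty closed convex set is bounded iff its recession cone is $ \lbrace 0 \rbrace $. For finitely many closed convex sets with nonempty intersection, the recession cone of the intersection is the intersection of their recession cones. (If a finite intersection is empty, it is trivially bounded and the whole intersection is empty, but Helly's finite hypothesis already forces finite intersections to be nonempty.) Since $ A_x = C \cap H_x $ with $ H_x $ a half-space, the recession cone of $ \bigcap_{x \in F} A_x $ for finite $ F \subseteq C $ is
\[
\lbrace w \in 0^+C : \bsz{w}{f(x)-z} \leq 0 \ \text{ for all } x \in F \rbrace .
\]
So it suffices to find a finite $ F $ such that every unit vector $ u \in 0^+C $ satisfies $ \bsz{u}{f(x)-z} > 0 $ for some $ x \in F $.

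The key step, and the one I expect to be the crux, is to produce such an $ x $ for a single direction $ u $; here strict monotonicity enters. Write $ z = \sum_{i} \lambda_i f(x_i) $ as a finite convex combination with $ x_i \in C $. For $ u \in 0^+C \setminus \lbrace 0 \rbrace $ the points $ x_i + u $ lie in $ C $, and strict monotonicity gives $ \bsz{u}{f(x_i+u) - f(x_i)} > 0 $. Averaging with weights $ \lambda_i $ and using $ \sum_i \lambda_i \bsz{u}{f(x_i)-z} = 0 $, we get
\[
\sum_i \lambda_i \bsz{u}{f(x_i+u)-z} > 0 ,
\]
so some $ x^u := x_i + u $ satisfies $ \bsz{u}{f(x^u)-z} > 0 $. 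By continuity in $ w $, the strict inequality $ \bsz{w}{f(x^u)-z} > 0 $ persists on an open neighbourhood $ U_u $ of $ u $.

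The set $ 0^+C \cap \lbrace \Vert w \Vert = 1 \rbrace $ is compact, because $ 0^+C $ is closed when $ C $ is closed. Hence finitely many $ U_{u_1}, \dots, U_{u_m} $ cover it. Setting $ F := \lbrace x^{u_1}, \dots, x^{u_m} \rbrace $, the recession cone of $ \bigcap_{x \in F} A_x $ is $ \lbrace 0 \rbrace $, so this finite intersection is bounded. If $ 0^+C = \lbrace 0 \rbrace $, then $ C $ is already bounded and any single $ A_x $ works. Helly's Theorem then yields $ \bigcap_{x \in C} A_x \neq \emptyset $.
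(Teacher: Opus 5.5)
Your proof is correct and follows essentially the same route as the paper: Helly's Theorem is applied to the family $\{A_x\}$. The bounded finite subfamily comes from strict monotonicity along each unit recession direction of $C$, via a convex representation of $z$, together with compactness of $0^+C$ intersected with the unit sphere. The differences are only cosmetic. You average over the convex combination instead of first choosing an index with $\langle u, f(x_i)-z\rangle \geq 0$. You use the intersection formula for recession cones where the paper proves boundedness of $M$ by a direct contradiction. You also handle the case of fewer than $n+1$ distinct members explicitly.
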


\begin{proof}
The case when $ C $ is a singleton is trivial, so from 
now on suppose that this is not the case.  
The idea is to use Helly's Theorem. 
Let us consider the set 
\[
C_x := 
\lbrace s \in C \, : \, 
\bsz{x-s}{f(x)-z} \geq 0 
\rbrace 
\]
for every $ x \in C $. It is obvious that $ x \in C_x \,$, 
moreover $ C_x $ is the intersection of a half-space and 
$ C $, therefore it is closed and convex. That is, 
$ \mathcal{C} := \lbrace C_x \, : \, x \in C \rbrace $ 
is an infinite family of nonempty closed convex subsets 
of $ \Rn $. 
The assumption \eqref{eq-prop_HellyCond} ensures that 
every subfamily of $ n+1 $ sets from $ \mathcal{C} $ 
has nonempty intersection. The last assumption of 
Helly's Theorem which needs to be checked is the 
existence of a finite subfamily of $ \mathcal{C} $ with 
a bounded intersection. 

If $ C $ itself is bounded, this is evident. 
Suppose the contrary. Then the recession cone 
of $ C $ is non-trivial (see \cite[Theorem 8.4]{Roc70}). 
That is, 
\[
R := \lbrace h \in \Rn \, : \, x+h \in C 
\mbox{ for all } x \in C \rbrace \neq \lbrace 0 \rbrace. 
\] 
It is well-known (see \cite[Theorem 8.2]{Roc70}) 
that $ R $ is a 
closed, convex cone, since $ C $ is closed. 
We shall also consider 
\[
R_0 := \left \lbrace \frac{h}{\nma h \nma} \, : \, 
h \in R \setminus \{ 0 \} \right \rbrace, 
\]
which is the intersection of 
$ R $ and the unit sphere $ \mathcal{S}_{n-1} \,$, 
so $ R_0 $ is compact. 
\begin{claim}\label{clm-prop_Claim1}
For any $ h \in R_0 $ there exists $ x_h \in C $ 
such that 
$ \bsz{h}{f(x_h)-z} > 0 $. 
\end{claim}

\begin{prclm}
Fix an arbitrary $ h \in R_0 $. In the first step 
we will show that there is a vector $ w \in C $ fulfilling 
$ \bsz{h}{f(w)-z} \geq 0 $. Since $ z \in \conv (f(C)) $, 
there exist 
\[
\lambda_1 \,, \dots , \lambda_n \,, \lambda_{n+1} 
\in \left[ 0,1 \right] 
\hspace{3mm} \mbox{ and } \hspace{3mm} 
y_1 \,, \dots , y_n \,, y_{n+1} \in C 
\]
such that 
$ z = \lambda_1 f(y_1) + \dots + \lambda_n f(y_n) 
+ \lambda_{n+1} f(y_{n+1}) $ and 
$ \sum_{i=1}^{n+1} \lambda_i = 1 $, according to 
Carathéodory's Theorem. By calculating 
\begin{align*}
0 & = \bsz{h}{0} = \bsz{h}{z-z} 
= \bsz{h}{\sum_{i=1}^{n+1} \lambda_i f(y_i) 
- \sum_{i=1}^{n+1} \lambda_i z} \\ 
& = \sum_{i=1}^{n+1} \bsz{h}{\lambda_i \left( f(y_i)-z \right)} 
= \sum_{i=1}^{n+1} \lambda_i \bsz{h}{f(y_i)-z}, 
\end{align*} 
we obtain that $ \bsz{h}{ f(y_{i_0})-z } \geq 0 $ 
for at least one index $ i_0 \,$. Thus for $ w := y_{i_0} $ 
we indeed have $ \bsz{h}{f(w)-z} \geq 0 $. 
Since $ h \in R $, we know that $ x_h := w+h \in C $ as well. 
Now 
\begin{align*}
\bsz{h}{f(x_h) - z} & = 
\bsz{h}{f(x_h)-f(w)+f(w)-z} = 
\bsz{h}{f(x_h)-f(w)} \\ 
& +  
\bsz{h}{f(w)-z} 
= \bsz{x_h-w}{f(x_h)-f(w)} + \bsz{h}{f(w)-z} > 0, 
\end{align*} 
as the first term is positive (because $ f $ 
is strictly increasing) while the second term is non-negative 
due to the previous step. 
\flushright 
$ \boxtimes $ 
\end{prclm}
Now we are able to construct an open cover for $ R_0 \,$. 
For each $ x \in C $ let us define the open 
half-space 
\[ 
H_x := \left \lbrace 
v \in \Rn \, : \, \bsz{v}{f(x)-z} > 0. 
\right \rbrace
\]
According to Claim \ref{clm-prop_Claim1}, 
every vector $ h \in R_0 $ is contained in the corresponding 
set $ H_{x_h} \,$. Hence 
$ R_0 \subseteq \bigcap_{x \in C} H_x \,$. 
But $ R_0 $ is compact, so there exists a finite subcover, 
i.~e. there exist $ m \in  \NN $ and 
$ u_1 \,, \dots , u_m \in C $ such that 
\[
R_0 \subseteq \bigcap_{k = 1}^{m} H_{u_k} \,. 
\]
\begin{claim}\label{clm-prop_Claim2}
$ M:= \bigcap_{k=1}^{m} 
\lbrace s \in C \, : \, 
\bsz{u_k-s}{f(u_k)-z} \geq 0 
\rbrace $ 
is a nonempty compact, convex set. 
\end{claim}
\begin{prclm}
The classical Helly's Theorem for finite families 
of convex sets provides that $ M \neq \emptyset $, 
using assumption \eqref{eq-prop_HellyCond}. 
Closedness and convexity is obvious, so we only need 
to verify that $ M $ is bounded. Let $ Q $ denote its 
recession cone. Since $ M \subseteq C $, we know that 
$ Q \subseteq R $. 
Suppose that $ Q $ is nontrivial, so there exists 
$ 0 \neq q \in Q $ and hence 
\[ 
q_0 := \frac{q}{\nma q \nma} \in R_0 \,. 
\] 
But then there exists 
$ \ell \in \{ 1, \dots , m \} $ such that 
$ q_0 \in H_{u_\ell} \,$. This means 
$ \bsz{q_0}{f(u_{\ell}) - z} > 0 $. 
If we pick an arbitrary $ m \in M $, then 
$ m + \mu \cdot q \in M $ for any $ \mu > 0 $, because 
$ q \in Q $. This implies 
\begin{align*}
0  \leq \bsz{u_{\ell}- \left( m + \mu q \right)}{f(u_{\ell}) - z} 
= \bsz{u_{\ell}-m}{f(u_{\ell}) - z} - 
\mu \cdot \bsz{q}{f(u_{\ell}) - z} 
\end{align*} 
for every $ \mu > 0 $. 
However, we shall observe that, 
due to $ \bsz{q_0}{f(u_{\ell} - z)} > 0 $, 
the right hand side tends to $ - \infty $ 
as $ \mu \to + \infty $. This contradiction means that 
$ Q = \{ 0 \} $, so $ M $ has a trivial recession cone, 
hence it is bounded. 
\flushright 
$ \boxtimes $ 
\end{prclm}
According to Claim \ref{clm-prop_Claim2}, $ \mathcal{C} $ 
indeed has a finite subfamily with bounded intersection, 
namely $ \bigcap_{k=1}^{m} C_{u_k} = M $ is a compact set. 
Eventually we are able to apply Theorem \ref{thm-Helly} for 
$ \mathcal{C} $ and obtain that 
$ \bigcap \mathcal{C} \neq \emptyset $ 
which had to be verified. 
\end{proof}

\begin{remark}\label{rem-Lemma_Compact}
Let us note that when $ C $ is 
compact, then each set in the family 
$ \mathcal{C} $ is bounded even if 
$ z $ is not contained in $ \conv (f(C)) $. 
Thus if we suppose that $ C $ is compact, convex, then 
the statement of Lemma \ref{lemm-Intersection_nmpty} 
remains valid 
for arbitrary $ z \in \Rn $, while the proof is just 
a direct application of Helly's Theorem. 
\end{remark}

Combining the previous two Lemmas 
we are able to prove our main result about 
the existence and uniqueness of a generalized inverse. 

\begin{thm}\label{thm-MainThm-closed}
Let $ \emptyset \neq C \subseteq \Rn $ be a 
closed convex set and let 
$ f : C \map \Rn $ be a strictly increasing mapping. 
Then there exists a function 
$ f^{(-1)} : \conv \left( f(C) \right) \map C $ 
such that 
\begin{itemize}
\item $ f^{(-1)} \left( f(x) \right) = x $ 
for every $ x \in C $, 
\item $ \bsz{ f^{(-1)}(u)- 
f^{(-1)}(v) }{ u-v } \geq 0 $  
for all $ u,v \in \conv \left( f(C) \right) $. 
\end{itemize}
Moreover, $ f^{(-1)} $ is uniquely determined. 
\end{thm}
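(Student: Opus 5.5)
The plan is to define $f^{(-1)}(z)$ as the unique point of the set
\[
S(z) := \bigcap_{x \in C} \lbrace s \in C \, : \, \bsz{x-s}{f(x)-z} \geq 0 \rbrace
\hspace{10mm} \left( z \in \conv (f(C)) \right),
\]
and then to verify the two required properties. Uniqueness of the extended inverse will then come for free from Proposition \ref{prop-NecessCond}, which says that any extended monotone left-inverse must map $z$ into $S(z)$.

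\emph{Step 1: $S(z) \neq \emptyset$.} Fix $z \in \conv(f(C))$ and $x_1, \dots, x_{n+1} \in C$. Put $y_j := f(x_j)$. Since $f$ is increasing, hypothesis \eqref{eq1-lemma-linalg} of Lemma \ref{lemma-LinAlg1} holds. That lemma then gives $v \in \conv(x_1,\dots,x_{n+1}) \subseteq C$ with $\bsz{x_j-v}{f(x_j)-z} \geq 0$ for all $j$. Thus $v$ lies in the intersection \eqref{eq-prop_HellyCond}, and Lemma \ref{lemm-Intersection_nmpty} yields $S(z) \neq \emptyset$.

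\emph{Step 2: $S(z)$ is a singleton.} I expect this to be the main point, since no continuity of $f$ is available for a Minty-type limiting argument. Instead I would use only points on the segment between two candidates. Suppose $s,t \in S(z)$ with $s \neq t$, and let $x_\lambda := s+\lambda(t-s) \in C$ for $\lambda \in (0,1)$. Testing $s \in S(z)$ with $x = x_\lambda$ gives $\lambda \bsz{t-s}{f(x_\lambda)-z} \geq 0$. Testing $t \in S(z)$ with $x = x_\lambda$ gives $(\lambda-1)\bsz{t-s}{f(x_\lambda)-z} \geq 0$. Together these force
\[
\varphi(\lambda) := \bsz{t-s}{f(x_\lambda)-z} = 0 \hspace{10mm} \left( \lambda \in (0,1) \right).
\]
On the other hand, for $0<\lambda<\mu<1$ strict monotonicity gives $(\mu-\lambda)\bigl(\varphi(\mu)-\varphi(\lambda)\bigr) = \bsz{x_\mu-x_\lambda}{f(x_\mu)-f(x_\lambda)} > 0$. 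This contradicts $\varphi \equiv 0$, so $s=t$. Hence $f^{(-1)}(z)$, the unique element of $S(z)$, is well defined and takes values in $C$.

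\emph{Step 3: left-inverse and monotonicity.} For $z=f(y)$ with $y \in C$, we have $\bsz{x-y}{f(x)-f(y)} \geq 0$ for every $x \in C$, so $y \in S(f(y))$; by Step 2, $f^{(-1)}(f(y))=y$. For monotonicity, take $u,v \in \conv(f(C))$ and set $s=f^{(-1)}(u)$, $t=f^{(-1)}(v)$. Use again the midpoint $x = \tfrac{s+t}{2}$ of the segment. From $s \in S(u)$ we get $\bsz{t-s}{f(x)} \geq \bsz{t-s}{u}$, and from $t \in S(v)$ we get $\bsz{t-s}{f(x)} \leq \bsz{t-s}{v}$. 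Hence $\bsz{t-s}{v-u} \geq 0$, which is exactly $\bsz{f^{(-1)}(u)-f^{(-1)}(v)}{u-v} \geq 0$.

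\emph{Step 4: uniqueness.} If $g$ is any extended monotone left-inverse, Proposition \ref{prop-NecessCond} gives $g(z) \in S(z)$ for every $z \in \conv(f(C))$. By Step 2 this forces $g(z) = f^{(-1)}(z)$, so $g = f^{(-1)}$.
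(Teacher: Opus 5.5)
Your proposal is correct and follows the paper's proof essentially step by step: nonemptiness of $S(z)$ via Lemma \ref{lemma-LinAlg1} and Lemma \ref{lemm-Intersection_nmpty}, the left-inverse property, the midpoint argument for monotonicity, and Proposition \ref{prop-NecessCond} for uniqueness. The only difference is cosmetic and lies in the singleton step. You test both candidates at every point of the joining segment to get $\varphi\equiv 0$ and then compare two values of $\varphi$, whereas the paper tests $w$ at $\frac{v+3w}{4}$ and $v$ at $\frac{3v+w}{4}$ and adds the two inequalities.
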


\begin{proof}
Let $ z \in \conv (f(C)) $ and 
$ x_1 \,, \dots , x_n \,, x_{n+1} \in C $ 
be arbitrary points. Using the notation 
$ y_j := f(x_j) $ (for $ j = 1 , \dots , n+1 $), 
the strictly increasing property of $ f $ yields 
\[
\bsz{x_i-x_j}{y_i-y_j} \geq 0 
\hspace{10mm}
\left( i,j = 1 , \dots , d \right). 
\]
This means that the assumption \eqref{eq1-lemma-linalg} 
in Lemma \ref{lemma-LinAlg1} is fulfilled. Therefore 
there exists 
$ v \in \conv \left( x_1 \,, \dots , x_{n+1} \right) 
\subseteq C $ such that 
\[
\bsz{x_j - v}{y_j - z} \geq 0 
\hspace{10mm} 
\left( j = 1 , \dots , d \right). 
\]
That is, 
\[
\bigcap_{j=1}^{n+1} 
\lbrace s \in C \, : \, 
\bsz{x_j-s}{f(x_j)-z} \geq 0 
\rbrace \neq \emptyset. 
\]
Now the assertion of 
Lemma \ref{lemm-Intersection_nmpty} implies that 
\[
M_z := 
\bigcap_{x \in C} 
\lbrace s \in C \, : \, 
\bsz{x-s}{f(x)-z} \geq 0 
\rbrace \neq \emptyset. 
\]
We are going to show that $ M_z $ is actually a singleton. 
Suppose that $ v,w \in M_z $ and consider the points 
$ t_1 := \frac{v+3w}{4} $ and $ t_2 := \frac{3v+w}{4} $. 
Now $ t_1 \,, t_2 \in C $, moreover 
$ t_1 - t_2 = \frac{w-v}{2} $. Since $ v,w \in M_z $, 
the inequalities 
\[
\bsz{t_1-w}{f(t_1)-z} \geq 0 
\hspace{3mm} \mbox{ and } \hspace{3mm} 
\bsz{t_2-v}{f(t_2)-z} \geq 0 
\]
are fulfilled. But these are equivalent to 
\[
\bsz{\frac{v-w}{4}}{f(t_1)-z} \geq 0 
\hspace{3mm} \mbox{ and } \hspace{3mm} 
\bsz{\frac{w-v}{4}}{f(t_2)-z} \geq 0, 
\]
respectively. Multiplying by $ 2 $ and adding them up we 
get 
\[
0 \leq \bsz{\frac{w-v}{2}}{f(t_2) - f(t_1)} = 
\bsz{t_1-t_2}{f(t_2) - f(t_1)}. 
\]
Due to $ f $ being strictly increasing, this can occur 
only when $ t_1 = t_2 $ and, equivalently, $ v=w $. 
Thus $ M_z $ is indeed a singleton. 
The necessary condition in Proposition \ref{prop-NecessCond} 
states that if $ f^{(-1)} $ exists then 
$ f^{(-1)}(z) \in M_z $ must hold on its domain. 
This gives the unique definition of 
$ f^{(-1)} : \conv (f(C)) \map C $, namely 
\[
\mbox{let } f^{(-1)}(z) 
\mbox{ be the unique element of } M_z 
\mbox{ for every } z \in \conv (f(C)). 
\] 
Finally we have to show that this function is indeed a 
left inverse, and it is monotone increasing. 
For any $ x,y \in C $ we have 
$ \bsz{x-y}{f(x)-f(y)} \geq 0 $, so $ x \in M_{f(x)} $. 
Thus $ f^{(-1)}(f(x)) = x $ for all $ x \in C $. 
On the other hand, consider any two vectors 
$ u,v \in \conv (f(C)) $ and introduce the notations 
\[
d := \frac{f^{(-1)}(v) - f^{(-1)}(u)}{2} 
\hspace{3mm} \mbox{ and } \hspace{3mm} 
m := \frac{f^{(-1)}(u) + f^{(-1)}(v)}{2}.
\]
Due to $ m \in C $ and using the definition of 
$ f^{(-1)} $ we have 
\[
\bsz{m-f^{(-1)}(u)}{f(m)-u} \geq 0  
\hspace{3mm} \mbox{ and } \hspace{3mm} 
\bsz{m-f^{(-1)}(v)}{f(m)-v} \geq 0. 
\]
That is, $ \bsz{d}{f(m)-u} \geq 0 $ and 
$ \bsz{-d}{f(m)-v} \geq 0 $. After summation we get 
\[
0 \leq \bsz{d}{v-u} \leq \bsz{2d}{v-u} 
= \bsz{f^{(-1)}(v) - f^{(-1)}(u)}{v-u}. 
\]
Therefore $ f^{(-1)} $ turns out to be increasing, 
thus the proof is complete.  
\end{proof}

\begin{remark}
We wish to emphasize that the proof provides the 
definition of the \exmo explicitly. For 
any $ z \in \conv (f(C)) $ we have 
\[
\lbrace f^{(-1)}(z) \rbrace = 
\bigcap_{x \in C} 
\lbrace s \in C \, : \, 
\bsz{x-s}{f(x)-z} \geq 0 
\rbrace. 
\] 
One should also observe that the only occasion where 
we rely on the fact that $ z $ is contained in 
$ \conv (f(C)) $ is when we apply 
Lemma \ref{lemm-Intersection_nmpty}. 
However, we have already mentioned in 
Remark \ref{rem-Lemma_Compact} that if the 
domain is compact then 
Lemma \ref{lemm-Intersection_nmpty} remains valid 
for any $ z \in \Rn $. 
Thus if the domain of definition of a strictly increasing 
mapping is a compact convex set, then 
the generalized inverse has a unique extension to the 
whole space. We formulate this in the following Corollary. 
\end{remark}

\begin{coro}\label{coro-Compact_domain}
Let $ \emptyset \neq K \subseteq \Rn $ be a 
compact, convex set and let
$ f : K \map \Rn $ be a strictly increasing function. 
Then there exists a function 
$ f^{(-1)} : \Rn \map K $ 
such that 
\begin{itemize}
\item $ f^{(-1)} \left( f(x) \right) = x $ 
for every $ x \in K $, 
\item $ \bsz{ f^{(-1)}(u)- 
f^{(-1)}(v) }{ u-v } \geq 0 $  
for all $ u,v \in \Rn  $. 
\end{itemize}
Moreover, $ f^{(-1)} $ is uniquely determined by the 
identity 
\[
\lbrace f^{(-1)}(z) \rbrace = 
\bigcap_{x \in K} 
\lbrace s \in K \, : \, 
\bsz{x-s}{f(x)-z} \geq 0 
\rbrace
\hspace{10mm} \left( z \in \Rn \right). 
\] 
\end{coro}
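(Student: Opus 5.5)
The plan is to rerun the proof of Theorem \ref{thm-MainThm-closed} verbatim, now letting $z$ range over all of $\Rn$. The only place where $z \in \conv(f(C))$ was used is the application of Lemma \ref{lemm-Intersection_nmpty}. We replace that step by the direct Helly argument indicated in Remark \ref{rem-Lemma_Compact}.

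\textbf{Step 1: nonemptiness of $M_z$.} Fix $z \in \Rn$ and set $K_x := \lbrace s \in K : \bsz{x-s}{f(x)-z} \geq 0 \rbrace$ for $x \in K$. Each $K_x$ contains $x$, and it is the intersection of the compact convex set $K$ with a closed half-space, hence nonempty, compact and convex.

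Given any $x_1, \dots, x_{n+1} \in K$, put $y_j := f(x_j)$. Strict monotonicity of $f$ gives \eqref{eq1-lemma-linalg}, and Lemma \ref{lemma-LinAlg1} holds for arbitrary $z \in \Rn$. It therefore yields some $v \in \conv(x_1, \dots, x_{n+1}) \subseteq K$ lying in $\bigcap_{j} K_{x_j}$.

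Any single $K_x$ is already bounded, so the boundedness hypothesis of Theorem \ref{thm-Helly} is automatic. Helly's Theorem then gives
\[
M_z := \bigcap_{x \in K} K_x \neq \emptyset .
\]
The degenerate case where $K$ is a singleton, or the family has fewer than $n+1$ distinct members, is trivial.

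\textbf{Step 2: $M_z$ is a singleton and the map is a monotone left-inverse.} To show $M_z$ is a singleton, copy the argument from Theorem \ref{thm-MainThm-closed}. For $v, w \in M_z$, the points $t_1 = \frac{v+3w}{4}$ and $t_2 = \frac{3v+w}{4}$ lie in $K$ by convexity. Testing the defining inequalities at $t_1$ (for $w$) and at $t_2$ (for $v$) gives $\bsz{t_1-t_2}{f(t_2)-f(t_1)} \geq 0$, which forces $v = w$.

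Define $f^{(-1)}(z)$ as the unique element of $M_z$. Since $\bsz{x-y}{f(x)-f(y)} \geq 0$ for all $y \in K$, we get $x \in M_{f(x)}$, hence $f^{(-1)}(f(x)) = x$.

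For monotonicity, take $u, v \in \Rn$ and put $m := \frac{f^{(-1)}(u) + f^{(-1)}(v)}{2} \in K$. Testing the defining inequalities of $M_u$ and $M_v$ at $m$ and adding them gives $\bsz{f^{(-1)}(v)-f^{(-1)}(u)}{v-u} \geq 0$.

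\textbf{Step 3: uniqueness.} The argument of Proposition \ref{prop-NecessCond} goes through unchanged with domain $\Rn$ in place of $\conv(f(K))$. Any increasing left-inverse $g : \Rn \map K$ satisfies
\[
\bsz{x - g(z)}{f(x) - z} = \bsz{g(f(x)) - g(z)}{f(x) - z} \geq 0 \qquad (x \in K).
\]
Hence $g(z) \in M_z = \lbrace f^{(-1)}(z) \rbrace$, which yields both uniqueness and the displayed identity.

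The only genuine point to check is Step 1: that Lemma \ref{lemma-LinAlg1} places no restriction on $z$, and that compactness of each $K_x$ replaces the recession-cone argument. Both are immediate from the statements, so no real obstacle is expected.
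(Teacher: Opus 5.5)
Your proposal is correct and follows the paper's own route. The paper justifies the corollary exactly this way. By Remark~\ref{rem-Lemma_Compact}, compactness makes every member of the family bounded, so Lemma~\ref{lemm-Intersection_nmpty} holds for all $z\in\Rn$ by a direct application of Helly's Theorem; the rest of the proof of Theorem~\ref{thm-MainThm-closed} and the necessary condition of Proposition~\ref{prop-NecessCond} then go through unchanged with domain $\Rn$.
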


\section{Topological properties of $ f^{(-1)} $ and the domain}

In the one-dimensional case the only important property 
of the domain was convexity, the existence and uniqueness 
of the generalized inverse holds for open, closed and 
half-closed intervals. In the following example we 
demonstrate that in higher dimensions the closedness 
of the domain is a crucial condition. The idea is 
based on a counterexample presented by 
K. Okamura during the mentioned {\em 60th ISFE} meeting 
(see \cite[5. Remark]{60ISFE}). 

\begin{exmp}\label{exmpl-Okamura}
Consider the convex open half-disk 
$ S = \lbrace (x,y) \in \RR^2 : x > 0 
\mbox{ and } x^2 + y^2 < 1 \rbrace $ 
and the mapping 
$ f : S \map \RR^2 $ defined by 
$ f(x,y) = \left( x^2 - y^2 , 2xy \right) $. 
Then $ f $ is strictly increasing, but 
it has no \exmo. 
\end{exmp}

\begin{proof}
The fact that $ f $ is strictly increasing is a matter 
of elementary calculations. An elegant reasoning is 
contained in \cite[5. Remark]{60ISFE}, applying an 
identification between $ f $ and the complex square 
function $ z \mapsto z^2 $. 
Since 
\[
S = \left \lbrace (r \cos \varphi, r \sin \varphi) \in \RR^2 
\, : \, 0 < r < 1, 
\mbox{ and } 
-\frac{\pi}{2}  < \varphi < \frac{\pi}{2} \right \rbrace 
\]
and $ f \left( r \cos \varphi, r \sin \varphi \right) 
= \left( r^2 \cos \left( 2 \varphi \right) , 
r^2 \sin \left( 2 \varphi \right) \right) $, it is clear that 
\begin{align*}
f(S) & = 
\left \lbrace 
\left( r^2 \cos \left( 2 \varphi \right) , 
r^2 \sin \left( 2 \varphi \right) \right) 
\, : \, 
0 < r < 1, 
-\frac{\pi}{2}  < \varphi < \frac{\pi}{2} 
\right \rbrace  
\\ & = 
\left \lbrace 
\left( R \cos \psi , 
R \sin \psi \right) 
\, : \, 
0 < R < 1, -\pi  < \psi < \pi 
\right \rbrace 
\\ & = 
B \setminus 
\left \lbrace (x,0) \, : \, x \leq 0 \right \rbrace. 
\end{align*} 
Here 
$ B = \lbrace x \in \RR^2 \, : \, \nma x \nma < 1 \rbrace $ 
is the open unit ball. 
Consequently, $ \conv f(S) = B $. Suppose that there 
exists an \exmo $ F : B \map S $.  
In particular, there should exist $ (a,b) \in S $ such 
that $ F(0,0) = (a,b) $. 
Let us now investigate 
the necessary condition of Proposition \ref{prop-NecessCond} 
for $ (0,0) \in \conv f(S) $. 
Then 
\begin{align*}
0 & \leq \bsz{\left( \frac{a}{2} , 0 \right) - F(0,0)}{ 
f \left( \frac{a}{2} , 0 \right) - (0,0)} 
 = 
\bsz{\left( \frac{a}{2} , 0 \right) - (a,b)}{ 
\left( \frac{a^2}{4} , 0 \right) - (0,0)} 
\\ & = 
\bsz{ \left( - \frac{a}{2}, -b \right) }{
\left( \frac{a^2}{4}, 0 \right)} = - \frac{a^3}{8} < 0, 
\end{align*}
a contradiction. Thus $ f $ cannot have an 
extended monotone left-inverse. 
\end{proof}
In \cite[5. Remark]{60ISFE} it was proved that for 
the function $ f : S \map \RR^2 $ in the previous 
example there is no continuous extension of 
$ f^{-1} $ to $ \conv f(S) $. 
This is an interesting difference compared to the 
one-dimensional setting, where the ordinary inverse 
always has a continuous {\em and} monotone 
extension to the convex hull of the image set. 

However, it is possible to show that in the 
case of a closed, convex domain the 
generalized inverse function is continuous. 
Before we prove that, we formulate an easy 
geometric observation about euclidean spaces.  

\begin{prop}\label{prop-AngleLimit}
Let $ (x_k) : \NN \map \Rn $ be a sequence, 
$ h \in \Rn $ be a vector and $ \delta > 0 $ be a number 
such that $ \nma h \nma = 1 $, 
$ \nma x_k \nma > \delta $ for all $ k \in \NN $, moreover 
\[
\alpha_k := 
\bsz	{\frac{x_k}{\nma x_k \nma}}{h} \to 1 
\ \mbox{ as } \ k \to \infty. 
\]
Then, for any $ d \in \left( 0, \delta \right) $, 
we have 
\[
\lim_{k \to \infty} 
\bsz{\frac{x_k-dh}{\nma x_k - dh \nma}}{h} 
= 1.
\] 
\end{prop}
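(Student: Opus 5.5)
The plan is to reduce the statement to a two-dimensional computation in terms of $\alpha_k$ and $\nma x_k \nma$. First expand
\[
\bsz{x_k - dh}{h} = \nma x_k \nma \alpha_k - d,
\qquad
\nma x_k - dh \nma^2 = \nma x_k \nma^2 - 2d \nma x_k \nma \alpha_k + d^2 ,
\]
using $\nma h \nma = 1$. Writing $r_k := \nma x_k \nma > \delta$, the quantity to study becomes
\[
\beta_k := \frac{r_k \alpha_k - d}{\sqrt{r_k^2 - 2 d r_k \alpha_k + d^2}} .
\]
The denominator is positive: by Cauchy--Schwarz it is at least $(r_k - d)^2 > (\delta - d)^2 > 0$. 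So $\beta_k$ is well defined.

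Next, show $\beta_k \to 1$. Since $\beta_k \le 1$ by Cauchy--Schwarz, it suffices to show $1 - \beta_k^2 \to 0$ together with $\beta_k > 0$ for large $k$. The numerator satisfies $r_k\alpha_k - d \geq r_k\alpha_k - d\alpha_k - d(1-\alpha_k)$. For large $k$ we have $\alpha_k$ close to $1$, so it is at least $(\delta - d)\alpha_k - d(1-\alpha_k) > 0$. For the ratio, compute
\[
1 - \beta_k^2 = \frac{(r_k^2 - 2dr_k\alpha_k + d^2) - (r_k\alpha_k - d)^2}{r_k^2 - 2 d r_k \alpha_k + d^2} = \frac{r_k^2 (1-\alpha_k^2)}{r_k^2 - 2 d r_k \alpha_k + d^2}.
\]
This is just the squared sine of the angle, i.e.\ the squared component of $x_k - dh$ orthogonal to $h$, divided by the squared norm.

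The main obstacle is that $r_k$ may be unbounded or may vary, so the bound must be uniform in $r_k$. Dividing numerator and denominator by $r_k^2$ gives
\[
1 - \beta_k^2 = \frac{1-\alpha_k^2}{1 - 2 (d/r_k) \alpha_k + (d/r_k)^2} \leq \frac{1-\alpha_k^2}{(1 - d/r_k)^2} \leq \frac{1-\alpha_k^2}{(1-d/\delta)^2}.
\]
The middle inequality uses $\alpha_k \le 1$, and the last uses $d/r_k < d/\delta < 1$. Since $\alpha_k \to 1$, the right-hand side tends to $0$, so $\beta_k^2 \to 1$. Combined with $\beta_k > 0$ eventually, this gives $\beta_k \to 1$.
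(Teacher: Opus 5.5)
Your proof is correct, but it is organized differently from the paper's. The paper introduces the auxiliary function $g(t)=(t-d)^2/t$ and uses $\|x_k\|>\delta>d$ to bound $\|x_k\|-2d\alpha_k+d^2/\|x_k\|$ below by a constant $M>0$. From this it deduces that the ratio $(\|x_k\|-d)/\|x_k-dh\|$ tends to $1$. It then writes the cosine as this ratio times $\alpha_k$ plus the error term $\frac{d}{\|x_k-dh\|}(\alpha_k-1)$, and squeezes it between a lower bound tending to $1$ and the Cauchy--Schwarz upper bound $1$.

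You instead compute the squared sine exactly,
\[
1-\beta_k^2=\frac{r_k^2(1-\alpha_k^2)}{\|x_k-dh\|^2},
\]
which encodes the geometric fact that subtracting $dh$ leaves the component of $x_k$ orthogonal to $h$ unchanged. You then only need the lower bound $\|x_k-dh\|\ge r_k-d\ge r_k(1-d/\delta)$. This gives the explicit, uniform estimate $1-\beta_k^2\le(1-\alpha_k^2)/(1-d/\delta)^2$ with no auxiliary constant, which is arguably more transparent.

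The price is a separate sign check: you need $\beta_k>0$ eventually, and this is where you use $\alpha_k>0$ for large $k$. Without it, $\beta_k^2\to 1$ would not rule out the limit $-1$. The paper's squeeze controls the sign automatically. Incidentally, your displayed inequality for the numerator is actually an identity, which is harmless.
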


\begin{proof}
Consider the function $ g(t) := t-2d+\frac{d^2}{t} = 
\frac{1}{t} \left( t-d \right)^2 $ 
defined for all $ t > 0 $. 
Clearly $ g $ is continuous, 
non-negative and 
$ \lim_{t \to \infty} g(t) = + \infty $. 
Since $ \delta > d $, there exists $ M > 0 $ such that 
$ g(t) > M $ for all $ t \geq \delta $. 
In particular, $ g \left( \nma x_k \nma \right) > M $ 
for every index $ k \in \NN $. 
 
Since $ \alpha_k \leq 1 $ due to the CBS inequality, 
we have 
\[ 
\nma x_k \nma - 2d \alpha_k + \frac{d^2}{\nma x_k \nma} \geq 
\nma x_k \nma - 2d + \frac{d^2}{\nma x_k \nma} = 
g \left( \nma x_k \nma \right) > M 
\hspace{10mm} \left( k \in \NN \right). 
\] 
Now we shall calculate 
\begin{align*}
\left( \frac{\nma x_k \nma - d}{\nma x_k - d h \nma} 
\right)^2 
& = 
\frac{\nma x_k \nma ^2 - 2d \nma x_k \nma + d^2}{
\nma x_k \nma ^2 - 2d \bsz{x_k}{h} + d^2} 
= 
1 + 2d \frac{\bsz{x_k}{h}-\nma x_k \nma}{
\nma x_k \nma ^2 - 2d \bsz{x_k}{h} + d^2} \\ 
& = 
1 + 2d \frac{\alpha_k - 1}{
\nma x_k \nma - 2d \alpha_k + \frac{d^2}{\nma x_k \nma}} 
\to 1 
\ \mbox{ as } k \to \infty, 
\end{align*}
because the numerator tends to $ 0 $ while the 
denominator is bounded from below by $ M > 0 $. 
Consequently, the positive sequence 
$ \frac{\nma x_k \nma - d}{\nma x_k - d h \nma} $ 
also tends to $ 1 $. Finally, 
\begin{align*}
1 & \geq \bsz{\frac{x_k-dh}{\nma x_k - dh \nma}}{h} = 
\frac{\nma x_k \nma}{\nma x_k - dh \nma} \cdot 
\bsz	{\frac{x_k}{\nma x_k \nma}}{h} - 
\frac{d}{\nma x_k - dh \nma} = 
\frac{\nma x_k \nma - d}{\nma x_k - d h \nma} \cdot \alpha_k 
\\ & + \frac{d}{\nma x_k - dh \nma} \left( \alpha_k - 1 \right) 
> 
\frac{\nma x_k \nma - d}{\nma x_k - d h \nma} \cdot \alpha_k 
+ \frac{d}{\delta-d} \left( \alpha_k - 1 \right) 
\to 1 \cdot 1 + \frac{d}{\delta-d} \cdot 0 = 1. 
\end{align*}
Therefore 
$ \bsz{\frac{x_k-dh}{\nma x_k - dh \nma}}{h} \to 1 $ 
and the proof is complete. 
\end{proof}

\begin{thm}\label{thm-ExmoContinuous}
Let $ \emptyset \neq C \subseteq \Rn $ be a closed, 
convex set and let $ f : C \map \Rn $ be 
a strictly increasing mapping. Then the unique 
\exmo function 
$ f^{(-1)} : \conv \left( f(C) \right) \map C $ is 
continuous. 
\end{thm}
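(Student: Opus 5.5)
We prove continuity sequentially. Fix $z \in \conv(f(C))$ and a sequence $z_k \to z$ in $\conv(f(C))$, and put $s_k := f^{(-1)}(z_k)$, $s := f^{(-1)}(z)$. By the explicit description of the generalized inverse given after Theorem \ref{thm-MainThm-closed}, each $s_k$ lies in $C$ and satisfies
\[
\bsz{x-s_k}{f(x)-z_k} \geq 0 \hspace{10mm} (x \in C).
\]
The plan has two steps. First, show that $(s_k)$ is bounded. Second, show that every convergent subsequence has limit $s$. Together these give $s_k \to s$.

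\textbf{Limit identification.} Suppose $s_{k_j} \to t$. Since $C$ is closed, $t \in C$. For each fixed $x \in C$, letting $j \to \infty$ in the inequality above gives $\bsz{x-t}{f(x)-z} \geq 0$. This holds for all $x \in C$, so $t \in M_z$. Since $M_z = \{s\}$ is a singleton by Theorem \ref{thm-MainThm-closed}, we get $t = s$. Only closedness of $C$ is used here; continuity of $f$ is not needed, because $x$ is held fixed.

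\textbf{Boundedness, the main obstacle.} Suppose, for contradiction, that $\nma s_k \nma \to \infty$ along a subsequence. Set $x_k := s_k - s$. Passing to a further subsequence, assume $x_k/\nma x_k \nma \to h$ with $\nma h \nma = 1$, so $\alpha_k = \bsz{x_k/\nma x_k \nma}{h} \to 1$.
\begin{itemize}
\item Since $s$ and $s_k$ lie in $C$, which is closed and convex, $h$ belongs to the recession cone $R$ of $C$. Hence $s + \mu h \in C$ for every $\mu \geq 0$.
\item By Claim \ref{clm-prop_Claim1} in the proof of Lemma \ref{lemm-Intersection_nmpty}, applied at $z$ (the claim uses only $z \in \conv(f(C))$), there is $x_h \in C$ with $\bsz{h}{f(x_h)-z} > 0$. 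In the proof of that claim one may take $x_h = w + h$.
\item We want a test point $x \in C$ for which $\bsz{x - s_k}{f(x)-z_k}$ becomes negative for large $k$. This is the contradiction we aim for.
\end{itemize}

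Writing $x - s_k = (x - s) - x_k$, it suffices to control the direction of $s_k - x$. This is where Proposition \ref{prop-AngleLimit} enters: shifting the base point by $dh$ does not change the limiting direction. So for a fixed test point $x = s + dh \in C$, the unit vectors $(s_k - x)/\nma s_k - x \nma$ still converge to $h$.

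With the test point $x = s + dh$, the required inequality divided by $\nma s_k - x \nma$ reads
\[
\bsz{\frac{s_k - x}{\nma s_k - x \nma}}{f(x) - z_k} \leq 0 .
\]
In the limit this gives $\bsz{h}{f(s+dh) - z} \leq 0$ for every $d > 0$.

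The remaining point is to contradict this. I would take $d$ from Claim \ref{clm-prop_Claim1} applied with base point $w = s$; that is, we want $\bsz{h}{f(s)-z} \geq 0$. From the definition of $M_z$ with $x = s + dh$ we have $\bsz{dh}{f(s+dh)-z} \geq 0$. Combined with the limit inequality this forces $\bsz{h}{f(s+dh)-z} = 0$ for all $d > 0$.

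Now apply strict monotonicity between $s + dh$ and $s + d'h$ with $d' > d$:
\[
0 < \bsz{(d'-d)h}{f(s+d'h) - f(s+dh)} = (d'-d)\bigl( \bsz{h}{f(s+d'h)-z} - \bsz{h}{f(s+dh)-z} \bigr) = 0,
\]
which is a contradiction. Hence $(s_k)$ is bounded, and together with the limit identification this proves $s_k \to s$.

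The delicate step is the boundedness argument, specifically making sure the test points $s + dh$ stay in $C$ and that the limiting direction argument is valid. This is exactly what the recession cone and Proposition \ref{prop-AngleLimit} are meant to handle.
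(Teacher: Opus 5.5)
Your proof is correct, but it is organized differently from the paper's. The paper runs a single contradiction argument. From a subsequence with $\|f^{(-1)}(u_k)-f^{(-1)}(v)\|>\delta$ it extracts a limiting direction $h$ and places the test point $s=f^{(-1)}(v)+\frac{\delta}{2}h$ in $C$. It then proves $\langle h,f(s)-v\rangle>0$ by strict monotonicity at a midpoint, and contradicts monotonicity of $f^{(-1)}$ between $u_{m_k}$ and $f(s)$. Since $\|x_k\|$ is only known to exceed $\delta$, it uses Proposition \ref{prop-AngleLimit} to check that the shift by $\frac{\delta}{2}h$ preserves the limiting direction.

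You instead split the problem into boundedness plus identification of cluster points. Your identification step is a closed-graph argument: the inequalities defining $M_{z_k}$ pass to the limit for each fixed $x\in C$, and the singleton property of $M_z$ from Theorem \ref{thm-MainThm-closed} pins the limit down. It needs only closedness of $C$ and no direction estimates. Your boundedness step uses the same mechanism as the paper. On the ray $\{s+dh: d>0\}\subseteq C$, strict monotonicity makes $d\mapsto\langle h,f(s+dh)-z\rangle$ strictly increasing, while $s\in M_z$ makes it nonnegative. You only need this when $\|x_k\|\to\infty$, where $(x_k-dh)/\|x_k-dh\|\to h$ is immediate, so Proposition \ref{prop-AngleLimit} is superfluous in your route (citing it is harmless). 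The paper gains one uniform argument; you gain a more transparent structure (closed graph of $z\mapsto M_z$ plus boundedness) and no angle lemma.

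Two small presentational points:
\begin{itemize}
\item Drop the sentence about taking $d$ from Claim \ref{clm-prop_Claim1} with base point $w=s$ and wanting $\langle h,f(s)-z\rangle\geq 0$. It plays no role; what you actually use is $s\in M_z$ tested at $x=s+dh$.
\item Your unused bullet citing Claim \ref{clm-prop_Claim1} would finish the boundedness step at once. When $\|s_k\|\to\infty$, the limiting inequality $\langle h,f(x)-z\rangle\leq 0$ holds for every fixed $x\in C$, not only on the ray. This contradicts the point $x_h$ supplied by that Claim. Note that $h$ lies in the recession cone, because $s+\mu h\in C$ for all $\mu\geq 0$ and $C$ is closed.
\end{itemize}
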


\begin{proof}
We have to show that if a sequence 
$ \left( u_k \right) : \NN \map \conv (f(C)) $ 
converges to $ v \in \conv (f(C)) $ then 
$ f^{(-1)}(u_k) $ converges to $ f^{(-1)} (v) $. 

Suppose that, on the contrary, 
$ \lim_{k \to \infty} f^{(-1)}(u_k) \neq f (v) $. 
This means that there exists $ \delta > 0 $ 
and a subsequence $ \left( u_{\ell_k} \right) $ of 
$ (u_k) $ such that 
$ \nma f^{(-1)} \left( u_{\ell_k} \right) 
- f^{(-1)}  \left( v \right) \nma > \delta $. 
Without loss of generality we may assume that this 
sequence is the whole $ \left( u_k \right) $. 
Let us observe that 
\[
\frac{f^{(-1)}\left( u_k \right) - 
f^{(-1)}\left( v \right)}{\nma f^{(-1)}\left( u_k \right) - 
f^{(-1)}\left( v \right) \nma} 
\in \mathcal{S}_{n-1} 
\ \mbox{ for all } k \in \NN. 
\]
Since $ \mathcal{S}_{n-1} $ is compact, the above sequence 
has a convergent subsequence. 
That is, there exist $ h \in \Rn $ with 
$ \nma h \nma = 1 $, and strictly 
increasing sequence of indices 
$ (m_k) : \NN \map \NN $ such that if 
$ x_k := 
f^{(-1)}\left( u_{m_k} \right) - 
f^{(-1)}\left( v \right) $ 
for all $ k \in \NN $, then 
\[ 
\left\nma \frac{x_k}{\nma x_k \nma} - h \right\nma 
\to 0 
\ \mbox{ or, equivalently, } \ 
\bsz{\frac{x_k}{\nma x_k \nma}}{h} \to 1 . 
\]
As $ C $ is convex, $ f^{(-1)}(v) + 
\frac{\delta}{\nma x_k \nma} x_k  \in C $ for 
each $ k \in \NN $. But $ C $ is closed as well, so 
$ f^{(-1)}(v) + \delta h \in C $, whence 
\[ 
s := f^{(-1)}(v) + \frac{\delta}{2} h \in C 
\] also holds. 
Now let us apply Proposition \ref{prop-AngleLimit} 
to the sequence $ (x_k) $, the vector $ h $ and 
the constant $ \delta $. The Proposition claims that, 
in particular, for $ d=\frac{\delta}{2} $ the assertion 
\begin{equation}\label{eq-thm_Cont1}
1 = \lim_{k \to \infty} 
\bsz{\frac{x_k-\frac{\delta}{2}h}{
\nma x_k - \frac{\delta}{2}h \nma}}{h} 
= 
\lim_{k \to \infty} 
\bsz{\frac{ f^{(-1)}\left( u_{m_k} \right) - s}{
\nma f^{(-1)}\left( u_{m_k} \right) - s \nma}}{h} 
\end{equation} 
is valid. 
In the next step let us fix another vector 
$ t := \frac{f^{(-1)}(v) + s}{2} \in C $. Then 
\begin{align*}
\frac{\delta}{2} \bsz{h}{f(s)-v} & = 
\bsz{s - f^{(-1)}(v)}{f(s)-v} = 
\bsz{s-t}{f(s)-f(t)} + \bsz{s-t}{f(t)-v} \\ 
& + \bsz{t-f^{(-1)}(v)}{f(s)-v} = 
\bsz{s-t}{f(s)-f(t)} \\ 
& + \bsz{t - f^{(-1)}(v)}{f(t)-v} + 
\frac{1}{2} \bsz{s - f^{(-1)}(v)}{f(s)-v} > 0, 
\end{align*} 
because the first term is positive as $ f $ is strictly 
increasing, while the other two terms are 
non-negative, due to the defining properties of 
$ f^{(-1)} $. Thus 
$ \bsz{h}{f(s)-v} > 0 $, which implies that there 
exists $ \varepsilon > 0 $ and $ k_0 \in \NN $ such that 
\[
\bsz{h}{f(s)-u_{m_k}} > \varepsilon 
\ \mbox{ for all } k > k_0 \, 
\]
since $ u_{m_k} \to v $. 
Furthermore, from \eqref{eq-thm_Cont1} we get that  
\[ 
\left \vert 
\bsz	{f(s) - u_{m_k}}{\frac{ f^{(-1)}\left( u_{m_k} \right) - s}{
\nma f^{(-1)}\left( u_{m_k} \right) - s \nma} - h} 
\right \vert 
\leq 
\left \nma f(s) - u_{m_k} \right \nma \cdot 
\left \nma \frac{ f^{(-1)}\left( u_{m_k} \right) - s}{
\nma f^{(-1)}\left( u_{m_k} \right) - s \nma} - h \right \nma 
\to 0, 
\]
applying the CBS inequality and 
using that $ \left( f(s) - u_{m_k} \right) $ 
is a bounded sequence. 
In particular, there exists an index $ k_1 > k_0 $ 
such that 
\[
\bsz	{f(s) - u_{m_k}}{\frac{ f^{(-1)}\left( u_{m_k} \right) - s}{
\nma f^{(-1)}\left( u_{m_k} \right) - s \nma} - h} 
> - \frac{\varepsilon}{2} 
\ \mbox{ for all } k > k_1. 
\]
Combining the previous results we obtain that, 
for all $ k > k_1 \,$, the inequality 
\begin{align*}
& \bsz{f(s)-u_{m_k}}{\frac{ f^{(-1)}\left( u_{m_k} \right) - s}{
\nma f^{(-1)}\left( u_{m_k} \right) - s \nma}} 
= 
\bsz{f(s)-u_{m_k}}{\frac{ f^{(-1)}\left( u_{m_k} \right) - s}{
\nma f^{(-1)}\left( u_{m_k} \right) - s \nma}-h} \\ 
& + \bsz{f(s)-u_{m_k}}{h} > - \frac{\varepsilon}{2} 
+ \varepsilon = \frac{\varepsilon}{2} > 0 
\end{align*} 
holds. 
But this implies 
\[
\bsz{f^{(-1)}\left( u_{m_k} \right) - s}{u_{m_k} - f(s)} 
< 0 
\hspace{10mm} \left( k > k_1 \right), 
\]
which contradicts the monotonicity of $ f^{(-1)} $. 
Therefore our initial assumption about 
$ f^{(-1)}(u_k) $ not converging to $ f^{(-1)}(v) $ 
was false, so $ f^{(-1)} $ is continuous. 
\end{proof}

\section{Application for quasi-arithmetic means}\label{Section-Appl} 

In the final section of our paper we introduce the 
concept of vector valued quasi-arithmetic means. 
\begin{defin}
Let $ \emptyset \neq C \subseteq \Rn $ be a 
closed, convex set and let $ f : C \map \Rn $ 
be a strictly monotone mapping. Moreover, 
let $ m \in \NN $ and $ \lambda_1 \,, \dots , \lambda_m > 0 $ 
be fixed numbers such that $ m \geq 2 $ and 
$ \lambda_1 + \dots + \lambda_m = 1 $. 

Then the function 
$ \mathcal{M}_{f, \lambda_1 , \dots , \lambda_m}^{[m]} 
: C^m \map C $ defined by 
\[
\mathcal{M}_{f, \lambda_1 , \dots , \lambda_m}^{[m]} 
\left( x_1 \,, \dots , x_m \right) 
:= f^{(-1)} \bigl( \lambda_1 f(x_1) + \dots 
+ \lambda_m f(x_m) \bigr) 
\hspace{8mm} 
\left( x_1 \,, \dots , x_m \in C \right) 
\] 
is called the 
{\em $ m $-variable weighted quasi-arithmetic mean}. 
The {\em generator} of the mean is $ f $. 
\end{defin}

It is easy to see that for arbitrary weights 
$ \mathcal{M}_{f, \lambda_1 , \dots , \lambda_m}^{[m]} = 
\mathcal{M}_{-f, \lambda_1 , \dots , \lambda_m}^{[m]} \, $, 
thus it is enough to consider the case of a strictly 
increasing generator. In \cite{Leo23} the idea of 
vector-valued QAMs appears, although it is 
{\em a priori assumed} that the generator has a convex image, 
which we find rather restrictive. 

One of the first natural problems which arises 
immediately is the equality problem: for a 
fixed number of variables and fixed weights find 
those generators for which 
\begin{equation}\label{eq_EqualityProblem}
\mathcal{M}_{f, \lambda_1 , \dots , \lambda_m}^{[m]} 
\left( x_1 \,, \dots , x_m \right) 
= 
\mathcal{M}_{g, \lambda_1 , \dots , \lambda_m}^{[m]} 
\left( x_1 \,, \dots , x_m \right) 
\hspace{8mm} 
\left( x_1 \,, \dots , x_m \in C \right). 
\end{equation}
In the scalar valued setting when the generators 
are supposed the be continuous, 
then the solution is a well-known classical result: 
$ f = a \cdot g + b $ with some real numbers such that 
$ a \neq 0 $ (see \cite[Theorem 83 and Section 3.7]{HLP34} 
or \cite[Theorem A]{PP26}). 

In the case of non-continuous generators 
the situation is much more difficult. Even though it has been 
shown recently in \cite[Corollary 4.2]{PP26} 
that if the equality of 
two QAMs holds 
{\em for all} $ m \in \NN $, then the generators are 
affine transforms of each other, the same cannot 
be claimed in the case when we have 
{\em a fixed number of variables} $ m $. 
What is more, a fresh result of Kiss and Pasteczka 
gives examples for three-variable 
QAMs which are equal but their 
generators are not affine transforms of each other 
(see \cite[Example 1]{KP26}). 

The complete characterization of the solutions of the 
equality problem for fixed weights and 
a fixed number of variables is 
still open even in the real valued case. 
Therefore our aim here, instead of discussing 
the general equality problem, is to solve a particular 
case: for a fixed $ m \geq 2 $ when does the 
$ m $-variable vector-valued weighted quasi-arithmetic 
mean coincide with the weighted arithmetic mean? 
The $ m $-variable arithmetic mean (generated by 
the identity) with weights 
$ \lambda_1 \,, \dots , \lambda_m $ mean will be 
denoted by 
$ \mathcal{A}_{\lambda_1 , \dots , \lambda_m}^{[m]} $.  

\begin{prop}\label{prop-Application_invinj}
Let $ \emptyset \neq C \subseteq \Rn $ be a 
closed, convex set and let $ f : C \map \Rn $ 
be a strictly monotone mapping. Moreover, 
let $ m \geq 2 $ and $ \lambda_1 \,, \dots , \lambda_m > 0 $ 
such that 
$ \lambda_1 + \dots + \lambda_m = 1 $. 
Suppose that 
$ \mathcal{M}_{f, \lambda_1 , \dots , \lambda_m}^{[m]} 
= \mathcal{A}_{\lambda_1 , \dots , \lambda_m}^{[m]} $. 
Then, for all $ x \in C^{\circ} $, 
\[
y \in \conv (f(C)) \setminus \{ f(x) \} 
\ \mbox{ implies } \ 
f^{(-1)}(y) \neq x .
\]
\end{prop}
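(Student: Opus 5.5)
We argue by contradiction. Fix $x \in C^{\circ}$ and $y \in \conv(f(C))$ with $y \neq f(x)$, assume $f^{(-1)}(y) = x$, and put $w := y - f(x) \neq 0$. We may take $f$ strictly increasing, since $\mathcal{M}_{f}=\mathcal{M}_{-f}$. The tools are the following.
\begin{enumerate}[(i)]
\item The explicit description from the Remark after Theorem \ref{thm-MainThm-closed}: $f^{(-1)}(z) = x$ if and only if $\bsz{u-x}{f(u)-z} \geq 0$ for every $u \in C$.
\item Monotonicity of $f^{(-1)}$ (Theorem \ref{thm-MainThm-closed}) and its continuity (Theorem \ref{thm-ExmoContinuous}).
\item The hypothesis $\mathcal{M}^{[m]}_{f,\lambda_1,\dots,\lambda_m} = \mathcal{A}^{[m]}_{\lambda_1,\dots,\lambda_m}$, which computes $f^{(-1)}$ exactly at every point $\sum_i \lambda_i f(x_i)$.
\end{enumerate}
Interiority of $x$ is used only so that $x \pm \varepsilon v \in C$ for all small $\varepsilon>0$ and all unit vectors $v$.

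\textbf{Step 1 (the fibre is convex).} By (i), the fibre $F_x := \{ z \in \conv(f(C)) : f^{(-1)}(z) = x \}$ is cut out by the inequalities $\bsz{u-x}{f(u)-z} \geq 0$, $u\in C$. Each is affine in $z$, so $F_x$ is convex. It contains $f(x)$ and $y$, hence the whole segment $f(x)+tw$, $t \in [0,1]$.

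\textbf{Step 2 (a uniform jump of $f$ in direction $w$).} In the mean identity put $x_1 = x+\varepsilon w$ and $x_2 = \dots = x_m = x$. This gives
\[
f^{(-1)}\bigl(q_\varepsilon\bigr) = x + \lambda_1 \varepsilon w,
\qquad
q_\varepsilon := \lambda_1 f(x+\varepsilon w) + (1-\lambda_1) f(x).
\]
Monotonicity of $f^{(-1)}$ between $q_\varepsilon$ and $y$ yields $\lambda_1\varepsilon\bsz{w}{q_\varepsilon - y} \ge 0$. Dividing by $\lambda_1\varepsilon>0$ and using $q_\varepsilon - y = \lambda_1\bigl(f(x+\varepsilon w)-f(x)\bigr) - w$, this rearranges to
\[
\bsz{w}{f(x+\varepsilon w) - f(x)} \geq \frac{\nma w \nma^2}{\lambda_1}
\hspace{10mm} \left( \varepsilon>0 \mbox{ small} \right).
\]
Consequently $\bsz{w}{q_\varepsilon - f(x)} \geq \nma w\nma^2$ for all small $\varepsilon$, while $f^{(-1)}(q_\varepsilon) \to x$. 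Repeating this with $x - \varepsilon w$ and with the other weights $\lambda_i$ in the distinguished slot gives the mirror-image information on the opposite side of $x$.

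\textbf{Step 3 (contradiction) — the expected main obstacle.} Steps 1--2 alone are consistent: one checks that the pair of inequalities obtained so far does not contradict monotonicity of $f$. The plan is to play the exact formula from (iii) against the characterization (i) at points lying strictly beyond the fibre.

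Concretely, compare $q_\varepsilon$ with the points $f(x) + tw$ of $F_x$ from Step 1, and choose the second argument $x_2 = x - c\varepsilon w$ with $c$ tuned so that the mean lands exactly at $x$. Then test (i) at the resulting point $z$ with $u = x \pm \varepsilon w$. The aim is a point $z$ with $\bsz{w}{z - f(x)} > \nma w \nma^2$ which (iii) forces to have $f^{(-1)}(z) = x$. Iterating the Step 2 bound would then push the fibre $F_x$ unboundedly far in the direction $w$.

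That contradicts the bound from Step 2 combined with Claim-type recession arguments as in Lemma \ref{lemm-Intersection_nmpty}. Alternatively it contradicts continuity of $f^{(-1)}$ at a fixed point $q_{\varepsilon_0}$, whose image $x+\lambda_1\varepsilon_0 w$ differs from $x$.

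I am least sure how to manufacture that $z$, since $f$ is not assumed continuous. My first attempt would be an $m$-variable combination in which the jump $f(x+\varepsilon w)-f(x)$ of Step 2 appears with total weight larger than $\lambda_1$, obtained by iterating the construction. If this fails, I would fall back on using Step 1 with $y$ replaced by arbitrary fibre points and optimizing over $t$.
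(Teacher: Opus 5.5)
Your Steps 1 and 2 are correct. Step 2 is essentially the paper's first step, with a slightly sharper constant. However, the proof stops exactly where the contradiction has to be produced: Step 3 is a list of hopes, not an argument.

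The concrete construction you suggest does not deliver what you want. Take $x_1=x+\varepsilon w$ and $x_2=\dots=x_m=x-c\varepsilon w$ with $c=\lambda_1/(1-\lambda_1)$. The resulting fibre point $z$ satisfies
\[
\langle w, z-f(x)\rangle=\lambda_1\langle w, f(x+\varepsilon w)-f(x)\rangle+(1-\lambda_1)\langle w, f(x-c\varepsilon w)-f(x)\rangle .
\]
The last term is negative by strict monotonicity of $f$, so nothing you have forces $\langle w,z-f(x)\rangle>\|w\|^2$. The later appeals to recession cones, iteration and continuity of $f^{(-1)}$ are not tied to any concrete inequality. Your remark that Steps 1--2 are ``consistent'' is also misleading. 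They are consistent with monotonicity of $f$, but not with monotonicity of $f^{(-1)}$ plus the mean identity. The contradiction needs neither the convexity of the fibre nor continuity.

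The missing idea is to test the mean point $q_\varepsilon$ against $f(x+\delta w)$ with $0<\delta<\lambda_1\varepsilon$, rather than against $y$. This is a point of $f(C)$ whose preimage lies strictly between $x$ and $f^{(-1)}(q_\varepsilon)=x+\lambda_1\varepsilon w$. Put $\phi(s):=\langle w, f(x+sw)-f(x)\rangle$ for $0<s<s_0$, where $x+sw\in C$.

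\begin{itemize}
\item Monotonicity of $f^{(-1)}$ gives $(\lambda_1\varepsilon-\delta)\langle w, q_\varepsilon-f(x+\delta w)\rangle\ge 0$, i.e.\ $\lambda_1\phi(\varepsilon)\ge\phi(\delta)$.
\item Let $\nu:=\inf_{0<s<s_0}\phi(s)$. Your Step 2 gives $\nu\ge\|w\|^2/\lambda_1>0$.
\item Since $m\ge 2$ forces $\lambda_1<1$, we have $\nu/\lambda_1>\nu$. So there is $\varepsilon$ with $\phi(\varepsilon)<\nu/\lambda_1$.
\item Then $\phi(\delta)\le\lambda_1\phi(\varepsilon)<\nu$, contradicting the definition of $\nu$.
\end{itemize}

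This is exactly the paper's argument. The paper puts the weight $1-\lambda_1$ on $f(z)$ and uses the midpoint $u$ instead. It obtains $\nu>0$ by comparing $f(x+\alpha h)$ directly with $y$.

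The mechanism is that a positive jump $\nu$ of $f$ in direction $w$ at $x$ cannot survive. The mean point $q_\varepsilon$ carries only the fraction $\lambda_1<1$ of the jump. Yet its preimage lies beyond $x+\delta w$, whose image already carries at least the full jump $\nu$.
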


\begin{proof} 
Firstly, $ \mathcal{M}_{f, \lambda_1 , \dots , \lambda_m}^{[m]} 
= \mathcal{A}_{\lambda_1 , \dots , \lambda_m}^{[m]} $ 
implies that, in particular, 
\begin{equation}\label{eq-prop_Appl_2mean}
\begin{aligned}
f^{(-1)} & \left( \lambda_1 f(x) + 
\left( 1-\lambda_1 \right) f(y) \right) = 
\mathcal{M}_{f, \lambda_1 , \dots , \lambda_m}^{[m]} 
\left( x,y, \dots , y \right) \\ 
& = \mathcal{A}_{\lambda_1 , \dots , \lambda_m}^{[m]} 
\left( x,y, \dots , y \right) = 
\lambda_1 x + \left( 1- \lambda_1 \right) y 
\hspace{15mm} 
\left( x,y \in C \right). 
\end{aligned} 
\end{equation}

Suppose that, contrary to the statement, 
there exists $ x \in C^{\circ} $ and 
$ y \in \conv(f(C)) $ such 
that $ f^{(-1)}(y) = x $ but $ y \neq f(x) $. 
Let us define the unit vector 
$ h := \frac{y-f(x)}{\nma y-f(x) \nma} $. 
Since $ x \in C^{\circ} $, there exists 
$ \alpha_0 > 0 $ such that 
$ x + \alpha h \in C $ for all 
$ \alpha \in (0,\alpha_0) $. Using the monotonicity 
of $ f^{(-1)} $ we obtain 
\begin{align*}
0 \leq  
\bsz	{f(x + \alpha h) - y}{ 
f^{(-1)} \left( f(x + \alpha h) \right) - f^{(-1)}(y) } 
= \alpha \cdot \bsz{f(x + \alpha h) - y}{h}, 
\end{align*}
so $ 0 \leq \bsz{f(x + \alpha h) - y}{h} $. 

Now let us observe that, 
for all $ \alpha \in (0,\alpha_0) $, 
there uniquely exist 
$ \mu_{\alpha} > 0 $ and $ w_{\alpha} \in \Rn $ 
such that $ \mu_{\alpha} \geq \nma y - f(x) \nma $ 
and $ \bsz{w_{\alpha}}{h} = 0 $, moreover 
\begin{equation}\label{eq-prop_Appl-decomp}
f(x + \alpha h) = f(x) + \mu_{\alpha} h + w_{\alpha}. 
\end{equation}
%
Indeed, 
the decomposition \eqref{eq-prop_Appl-decomp} is 
due to the Gram--Schmidt algorithm with 
\[
\mu_{\alpha} = \bsz{f(x + \alpha h)-f(x)}{h} 
\hspace{3mm} \mbox{ and } \hspace{3mm} 
w_{\alpha} = f(x+ \alpha h) - f(x) - \mu_{\alpha} h, 
\]
while the previous step of the proof yields 
\begin{align*}
0 & \leq \bsz{f(x + \alpha h) - y}{h} = 
\bsz{f(x + \alpha h) - f(x) + f(x) -y}{h} = 
\bsz{f(x+\alpha h)-f(x)}{h} \\ 
& + \bsz{f(x) - y}{h} 
= \mu_{\alpha} - \bsz{y-f(x)}{\frac{y-f(x)}{\nma y-f(x) \nma}} 
= \mu_{\alpha} - \nma y - f(x) \nma, 
\end{align*}
which is equivalent to 
$ \mu_{\alpha} \geq \nma y - f(x) \nma $. 
Therefore 
\begin{equation}\label{eq-prop_Appl_nu}
\nu := \inf 
\left \lbrace 
\bsz{f(x+\alpha h)-f(x)}{h} \ : \ 
\alpha \in \left( 0,\alpha_0 \right) 
\right \rbrace
\geq 
\nma y - f(x) \nma . 
\end{equation}

In the next step let us choose 
$ \beta \in \left( 0 , \alpha_0 \right) $ such that 
for $ z := x + \beta h \in C $ we have 
\begin{equation}\label{eq-prop_Appl-zchoice}
\nu \leq \bsz{f(z)-f(x)}{h} < 
\frac{\nu}{\left( 1-\lambda_1 \right)}. 
\end{equation} 
Moreover, let us fix 
$ u := \frac{1+\lambda_1}{2} x + \frac{1-\lambda_1}{2} z 
= x + \frac{1-\lambda_1}{2} \beta h 
\in C $. Then 
\begin{align*}
0 & \leq 
\bsz{f^{(-1)} 
\bigl( \lambda_1 f(x) + (1-\lambda_1) f(z) \bigr) - 
f^{(-1)} \bigl( f(u) \bigr)}{ 
\bigl( \lambda_1 f(x) + (1-\lambda_1) f(z) \bigr) - f(u)} 
\\ 
& = 
\bsz{ \bigl( \lambda_1 x + (1-\lambda_1) z \bigr) - u}{ 
(1 - \lambda_1)f(z)-(1 - \lambda_1)f(x) + f(x) - f(u)} 
\\ 
& = 
\bsz{\frac{1-\lambda_1}{2} \beta \cdot h}{
\bigl( (1 - \lambda_1)(f(z)-f(x)) \bigr)
- \bigl( f(u) - f(x) \bigr)} 
\\ 
& = 
\frac{1-\lambda_1}{2} \beta \cdot 
\Bigl( (1-\lambda_1) 
\bsz{f(z)-f(x)}{h} - \bsz{f(u)-f(x)}{h} \Bigr) 
\\ 
& < 
\frac{1-\lambda_1}{2} \beta \cdot 
\Bigl( (1-\lambda_1) \frac{\nu}{1-\lambda_1} - \nu \Bigr) 
= 0, 
\end{align*}
where we have used \eqref{eq-prop_Appl_2mean}, 
\eqref{eq-prop_Appl_nu} and \eqref{eq-prop_Appl-zchoice} 
together with the monotonicity of $ f^{(-1)} $. 
However, the obtained contradiction means that 
$ f^{(-1)}(y) = x $ implies $ y = f(x) $, so the 
proof is complete by contraposition. 
\end{proof}

\begin{thm}\label{thm_Appl_vvQAMs}
Let $ m \geq 2 $, and let 
$ f : \Rn \map \Rn $ be a strictly increasing mapping. 
Moreover 
let $ m \geq 2 $ and 
$ \lambda_1 \,, \dots , \lambda_m > 0 $ 
such that 
$ \lambda_1 + \dots + \lambda_m = 1 $. 
Then the following two statements are equivalent: 
\begin{enumerate}
\item[\emph{(i)}] 
For all 
$ x_1 \,, \dots , x_m \in \Rn $ 
it holds that 
\[
\mathcal{M}_{f, \lambda_1 , \dots , \lambda_m}^{[m]} 
\left( x_1 \,, \dots , x_m \right)
= \mathcal{A}_{\lambda_1 , \dots , \lambda_m}^{[m]} 
\left( x_1 \,, \dots , x_m \right)
\] 
\item[\emph{(ii)}]
There exist a positive definite matrix 
$ A \in \mathcal{M}_{n \times n}(\RR) $ and a vector 
$ b \in \Rn $ such that 
\[
f(t) = At + b 
\hspace{8mm} 
\left( t \in \Rn \right). 
\]
\end{enumerate}
\end{thm}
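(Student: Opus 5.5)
The direction (ii)$\Rightarrow$(i) is a direct computation. If $f(t)=At+b$ with $A$ positive definite, then $f$ is strictly increasing, since $\bsz{f(x)-f(y)}{x-y}=\bsz{A(x-y)}{x-y}>0$ for $x\neq y$. Moreover $f$ is a bijection of $\Rn$ onto itself, so $\conv(f(\Rn))=\Rn$. By the uniqueness part of Theorem \ref{thm-MainThm-closed}, $f^{(-1)}=f^{-1}$, that is, $f^{(-1)}(z)=A^{-1}(z-b)$. Then, using $\sum_i\lambda_i=1$,
\[
f^{(-1)}\Bigl(\sum_i \lambda_i (Ax_i+b)\Bigr)=A^{-1}\Bigl(A\sum_i\lambda_i x_i\Bigr)=\sum_i\lambda_i x_i .
\]

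For (i)$\Rightarrow$(ii) I first show that $f$ maps onto a convex set on which $f^{(-1)}$ is a genuine inverse. Here $C=\Rn$, so $C^{\circ}=\Rn$, and Proposition \ref{prop-Application_invinj} applies at every point. Take $y\in\conv(f(\Rn))$ and put $x:=f^{(-1)}(y)\in\Rn$. The proposition forbids $y\neq f(x)$, so $y=f(x)$. Hence $f(\Rn)=\conv(f(\Rn))$ is convex and $f^{(-1)}=f^{-1}$ on it. Consequently $f^{-1}$ is continuous by Theorem \ref{thm-ExmoContinuous}, and $f$ is a bijection between $\Rn$ and the convex set $D:=f(\Rn)$.

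Next I derive Jensen's equation. Specialising (i) to $(x,y,\dots,y)$ as in \eqref{eq-prop_Appl_2mean} gives $f^{-1}(\lambda f(x)+(1-\lambda)f(y))=\lambda x+(1-\lambda)y$ with $\lambda:=\lambda_1\in(0,1)$. Applying $f$ to both sides yields
\[
f(\lambda x+(1-\lambda)y)=\lambda f(x)+(1-\lambda)f(y)\qquad(x,y\in\Rn).
\]
So the map $g:=f^{-1}:D\to\Rn$ is $\lambda$-affine on the convex set $D$, and it is continuous. The plan is to prove that a continuous map satisfying the functional equation for a fixed $\lambda\in(0,1)$ is affine. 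The argument is the standard one: iterating the equation shows that $g$ is affine along the dense set of ratios generated by $\lambda$ on every segment, namely the values obtained by repeatedly applying $t\mapsto\lambda t$ and $t\mapsto\lambda t+(1-\lambda)$ to $\{0,1\}$, which is dense in $[0,1]$. Continuity then makes $g$ affine on every segment of $D$, hence affine on $D$.

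An affine bijection $g$ from the convex set $D$ onto $\Rn$ forces $D$ to have nonempty interior and, in fact, $D=\Rn$, with $g(z)=Bz+c$ for some invertible $B$. Thus $f(t)=At+b$ with $A=B^{-1}$ and $b=-B^{-1}c$. Strict increase gives $\bsz{Ah}{h}>0$ for all $h\neq0$. This is positive definiteness of $A$ in the sense of the quadratic form; note that symmetry of $A$ would not follow, so I read ``positive definite'' as $\bsz{Ah}{h}>0$ for all $h\neq 0$.

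The step I expect to be most delicate is the passage from the $\lambda$-convexity equation to affinity. I would avoid it by applying it to $f$ itself rather than to $g$: since $g=f^{-1}$ is continuous, the equation holds for $f$ on all of $\Rn$, but $f$'s own continuity is not given. I therefore prefer to work with $g$ on $D$. The alternative is to first note that $f$ is $\lambda$-affine on lines, so restricted to a line it is midpoint-affine in the rational-$\lambda$ sense and monotone along $h$ (via $\bsz{f(x+th)}{h}$ increasing in $t$). One could then invoke the classical fact that a $\lambda$-affine map bounded on an interval is affine.
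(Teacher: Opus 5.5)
Your proof is correct, and it reaches affinity by a different route from the paper's.

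\textbf{How the two routes differ.} Both arguments start the same way: Proposition \ref{prop-Application_invinj} gives $f(\Rn)=\conv(f(\Rn))$ and $f^{(-1)}=f^{-1}$, and Theorem \ref{thm-ExmoContinuous} gives continuity of $f^{-1}$. From there:
\begin{itemize}
\item The paper proves that $f(\Rn)$ is open. A supporting hyperplane at a boundary point $t=f(x_0)$ is contradicted by testing strict monotonicity at $x_0-h$. It then applies Brouwer's invariance of domain to the continuous injection $f^{-1}$ to conclude that $f$ itself is continuous. Finally it solves $f(\lambda_1x+(1-\lambda_1)y)=\lambda_1f(x)+(1-\lambda_1)f(y)$ on $\Rn$ for the continuous $f$.
\item You transfer the equation to the already continuous map $g=f^{-1}$ on the convex set $D=f(\Rn)$ and prove $g$ affine there. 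You then recover both $D=\Rn$ and the affinity of $f$: an affine map from a convex set onto $\Rn$ must have full-dimensional affine hull and be injective on it.
\end{itemize}
Your route dispenses with the openness argument and with invariance of domain entirely. The small price is handling a general convex domain (extending $g$ to $\mathrm{aff}(D)$ and counting dimensions), which you did correctly. The paper's route yields continuity of $f$ as an intermediate fact in exchange. Your remark that only $\bsz{Ah}{h}>0$, not symmetry of $A$, can be derived matches what the paper actually proves.

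\textbf{A density claim to fix.} The orbit of $\{0,1\}$ under $t\mapsto\lambda t$ and $t\mapsto\lambda t+(1-\lambda)$ is dense in $[0,1]$ only when $\lambda\ge\frac12$. For $\lambda<\frac12$ the images $[0,\lambda]$ and $[1-\lambda,1]$ leave a gap; for $\lambda=\frac13$ you get exactly the endpoints of the Cantor set. The fix is immediate. Swapping $x$ and $y$ shows that $g$ is also $(1-\lambda)$-affine, so you may assume $\lambda\ge\frac12$. Alternatively, use all four maps $t\mapsto\lambda t$, $\lambda t+1-\lambda$, $(1-\lambda)t$, $(1-\lambda)t+\lambda$. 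Then every $t\in[0,1]$ lies in a $k$-fold image of $[0,1]$ of length $\lambda^k$ that contains an orbit point, and density follows.

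\textbf{The alternative in your last paragraph.} It is not sufficient as stated. Monotonicity of $t\mapsto\bsz{f(x+th)}{h}$ bounds only the $h$-component of $f$ on the line, not $f$ itself, so the ``bounded $\lambda$-affine implies affine'' fact does not apply directly. Your main line does not use this alternative, so the proof is unaffected.
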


\begin{proof} 
We begin with the proof of (i) $ \Longrightarrow $ (ii). 
From Proposition \ref{prop-Application_invinj} 
we have that $ f^{(-1)} $ is injective, so 
it coincides with the ordinary inverse. 
In particular, 
$ \conv \left( f \left( \Rn \right) \right) = 
f \left( \Rn \right) $, i.~e. $ f \left( \Rn \right) $ 
is convex. Suppose that $ t \in f \left( \Rn \right) $ 
is a boundary point of $ f \left( \Rn \right) $. 
Then there exist a supporting 
hyperplane for $ f \left( \Rn \right) $ at the 
point $ t $ (cf. \cite[Theorem 11.6]{Roc70}). 
This implies the existence of a vector 
$ 0 \neq h \in \Rn $ such that 
\[ 
\bsz{f(x)-t}{h} \geq 0 
\hspace{10mm} \left( x \in \Rn \right). 
\]
However, $ f $ is strictly increasing, so 
for $ x_0 := f^{-1}(t) $ we have 
\[ 
0< \bsz{f(x_0 - h)-f(x_0)}{
(x_0-h)-x_0} = 
\bsz{f(x_0 - h) - t}{-h}, 
\] 
which contradicts the choice of $ h $. Thus 
$ \partial \bigl( f \left( \Rn \right) \bigr) 
\cap f \left( \Rn \right) 
= \emptyset $, so $ f(\Rn) $ is open. 
Hence we may apply Brouwer's invariance of domain 
theorem for the injective, continuous 
(see Theorem \ref{thm-ExmoContinuous}) map 
$ f^{-1} : f \left( \Rn \right) \map \Rn $ and 
conclude that its inverse $ f : \Rn \to f(\Rn) $ 
is also continuous. 

Recalling 
\eqref{eq-prop_Appl_2mean} from the previous proof and 
applying $ f $ to both sides, we have 
\begin{equation}\label{eq-thm_Appl_lambda1}
\lambda_1 f(x) + (1-\lambda_1) f(y) 
= f \left( \lambda_1 x + (1-\lambda_1) y \right) 
\hspace{10mm} \left( x,y \in \Rn \right). 
\end{equation}
Consequently, every coordinate function 
$ f_i : \Rn \map \RR $ (for $ i = 1 , \dots , n $) of $ f $ 
is affine. That is, 
there exist $ a_i \in \Rn $ and $ b_i \in \RR $ 
such that 
\[
f_i(x) = \bsz{a_i}{x} + b_i 
\hspace{10mm} \left( i = 1, \dots , n \right)
\]
for all $ x \in \Rn $. The proof is similar to 
the solution process of Jensen's equation 
(which corresponds to \eqref{eq-thm_Appl_lambda1} with 
$ \lambda_1 = \frac{1}{2} $) discussed in 
\cite[Section 13.2]{Kuc09}. 
A slightly stronger statement can be found in 
\cite[Theorem 3.1]{BT26}. 
Summarizing the previous results, let 
$ A \in \mathcal{M}_{n \times n}(\RR) $ be the matrix 
which contains the vector $ a_i $ in its $ i $-th 
row and let the $ i $-th element of $ b \in \Rn $ 
be $ b_i $ (for $ i = 1 , \dots , n $). 
Then 
\[
f(x) = \left( f_1(x) , \dots , f_n(x) \right) = 
Ax + b 
\hspace{10mm} (x \in \Rn). 
\]
Furthermore, $ A $ has to be positive definite, because, 
for every $ 0 \neq x \in \Rn $, we have 
\[
\bsz{Ax}{x} = \bsz{(Ax+b)-(A \cdot 0+b)}{x-0} = 
\bsz{f(x)-f(0)}{x-0} > 0. 
\]

Finally, for the converse direction 
(ii) $ \Longrightarrow $ (i), 
let $ A $ be positive definite, $ b $ be arbitrary, 
and define $ f(t) = At+b $ for all $ t \in \Rn $. Then 
$ f^{-1}(y) = A^{-1}(y-b) $ for all $ y \in \Rn $, thus 
\begin{align*}
\mathcal{M}_{f, \lambda_1 , \dots , \lambda_m}^{[m]} 
& \left( x_1 \,, \dots , x_m \right)
= A^{-1} \bigl( \lambda_1 \left( A(x_1) + b \right) + 
\dots \lambda_m \left( A(x_m) + b \right) - b \bigr) \\  
& = 
A^{-1} \bigl( A \left( 
\lambda_1 x_1 + \dots + \lambda_m x_m \right) \bigr) = 
\mathcal{A}_{\lambda_1 , \dots , \lambda_m}^{[m]} 
\left( x_1 \,, \dots , x_m \right) 
\end{align*}
\end{proof}

\section{Concluding remarks and open problems} 

In Theorem \ref{thm_Appl_vvQAMs} the domain of the 
QAM is the whole space $ \Rn $. Although this is quite 
restrictive, we demonstrate that $ \Rn $ 
cannot be replaced by an arbitrary closed convex set. 

\begin{exmp}
Let 
$ C := \lbrace (t,0) \in \RR^2 : t \in [0,1] \rbrace $ 
and define $ f : C \map \Rn $ by 
\[
f \left( t,0 \right) := \left( t, g(t) \right) 
\hspace{10mm} 
\left( \, t \in [0,1] \, \right), 
\]
where $ g : [0,1] \map \RR $ is an arbitrary function. 
It is easy to check that $ f $ is strictly 
increasing, moreover the generalized left inverse is 
\[
f^{(-1)} \left( t,s \right) = (t,0) 
\hspace{10mm} 
\bigl( \, (t,s) \in \conv(f(C)) \, \bigr). 
\]
Clearly, $ f $ is not necessarily an affine function. 
On the other hand, for arbitrary vectors 
$ (x,0), (y,0) \in C $ we have 
\begin{align*}
& \mathcal{M}_{f,\frac{1}{2}, \frac{1}{2}}^{[2]} 
\left( (x,0),(y,0) \right) =  
f^{(-1)} \left( \frac{f(x,0)+f(y,0)}{2} \right) =  
f^{(-1)} \left( \frac{(x,g(x))+(y,g(y))}{2} \right) \\ 
& = 
f^{(-1)} \left( \frac{x+y}{2} , \frac{g(x)+g(y)}{2} \right) 
= \left( \frac{x+y}{2} , 0 \right) = 
\frac{(x,0) + (y,0)}{2} = 
\mathcal{A}_{\frac{1}{2}, \frac{1}{2}}^{[2]} 
\left( (x,0),(y,0) \right).
\end{align*}
\end{exmp}

On the other hand, this counterexample involves a 
pathological domain with empty interior. 

\begin{opprob}
Is it possible to prove the analogue of 
Theorem \ref{thm_Appl_vvQAMs} for a restricted domain, 
namely for any closed convex set $ C \subset \Rn $ 
such that $ C^{\circ} \neq \emptyset $?  
\end{opprob}

\begin{opprob} 
Is it possible to solve the equality problem of 
QAMs assuming that the generators are {\em continuous}? 
That is, how can we 
characterize the strictly increasing generators 
$ f, g : C \map \Rn $ for which \eqref{eq_EqualityProblem} 
holds on an appropriate domain $ C \subseteq \Rn $? 
\end{opprob}

\begin{opprob}
We shall note that Definition \ref{def-StrictlyMon} 
makes sense for an 
arbitrary inner product space instead of $ \Rn $. Even though 
throughout the paper we use methods which are typically 
finite dimensional, it is natural to ask the following: 
Does there exist an extended monotone 
left-inverse for a strictly monotone mapping 
$ f : C \map H $, where $ C $ is a closed, convex subset 
of an infinite dimensional Hilbert space $ H $? 
\end{opprob}



\end{document}